\renewcommand{\epsilon}{\varepsilon}
\numberwithin{equation}{section}
\newtheoremstyle{thmlemcorr}{10pt}{10pt}{\itshape}{}{\bfseries}{.}{10pt}{{\thmname{#1}\thmnumber{ #2}\thmnote{ (#3)}}}
\newtheoremstyle{thmlemcorr*}{10pt}{10pt}{\itshape}{}{\bfseries}{.}\newline{{\thmname{#1}\thmnumber{ #2}\thmnote{ (#3)}}}
\newtheoremstyle{defi}{10pt}{10pt}{\itshape}{}{\bfseries}{.}{10pt}{{\thmname{#1}\thmnumber{ #2}\thmnote{ (#3)}}}
\newtheoremstyle{remexample}{10pt}{10pt}{}{}{\bfseries}{.}{10pt}{{\thmname{#1}\thmnumber{ #2}\thmnote{ (#3)}}}
\newtheoremstyle{ass}{10pt}{10pt}{}{}{\bfseries}{.}{10pt}{{\thmname{#1}\thmnumber{ A#2}\thmnote{ (#3)}}}
\theoremstyle{thmlemcorr}
\newtheorem{theorem}{Theorem}
\numberwithin{theorem}{section}
\newtheorem{lemma}[theorem]{Lemma}
\newtheorem{corollary}[theorem]{Corollary}
\newtheorem{proposition}[theorem]{Proposition}
\theoremstyle{thmlemcorr*}
\newtheorem{theorem*}{Theorem}
\newtheorem{lemma*}[theorem]{Lemma}
\newtheorem{corollary*}[theorem]{Corollary}
\newtheorem{proposition*}[theorem]{Proposition}
\newtheorem{problem*}[theorem]{Problem}
\newtheorem{conjecture*}[theorem]{Conjecture}
\theoremstyle{defi}
\theoremstyle{remexample}
\newtheorem{remark}[theorem]{Remark}
\newtheorem{example}[theorem]{Example}
\theoremstyle{ass}
\newcommand{\Acal}{\mathcal{A}}
\newcommand{\Ecal}{\mathcal{E}}
\newcommand{\Ical}{\mathcal{I}}
\newcommand{\Jcal}{\mathcal{J}}
\newcommand{\Lcal}{\mathcal{L}}
\newcommand{\Ocal}{\mathcal{O}}
\newcommand{\Vcal}{\mathcal{V}}
\DeclareMathOperator{\diverg}{div}
\DeclareMathOperator{\dist}{dist}
\newcommand{\norm}[1]{\|#1\|}
\newcommand{\sprlr}[1]{\left( #1 \right)}
\newcommand{\dd}{\;\mathrm{d}}
\newcommand{\N}{\mathbb{N}}
\newcommand{\R}{\mathbb{R}}
\newcommand{\sym}{\mathrm{sym}}
\newcommand{\skw}{\mathrm{skew}}
\newcommand{\weakly}{\rightharpoonup}
\newcommand{\eps}{\epsilon}
\newcommand{\ffi}{\varphi}
\newcommand{\Qld}{Q^\ast}
\DeclareMathOperator{\SO}{SO}
\def\XXint#1#2#3{{\setbox0=\hbox{$#1{#2#3}{\int}$} 
\vcenter{\hbox{$#2#3$}}\kern-.5\wd0}}
\DeclareMathOperator{\cof}{cof}
\DeclareMathOperator{\tr}{Tr}
\DeclareMathOperator{\Glim}{\Gamma\text{-}\lim}
\DeclareMathOperator{\Id}{Id}
\newcommand{\rn}[1]{\nabla^\epsilon #1_\epsilon}
\newcommand{\ui}[1]{^{\left(#1\right)}}
\newcommand{\tnabla}{{\widetilde \nabla}}
\newcommand\restrict[1]{\raisebox{-.5ex}{$|$}_{#1}}
\title[Theories for incompressible rods]{Theories for incompressible rods: \\a rigorous derivation via $\Gamma$-convergence}
\author{Dominik Engl}
\address{Mathematisch Instituut, Universiteit Utrecht, Postbus 80010, 3508 TA Utrecht, The Netherlands}
\email{D.M.Engl@uu.nl}
\author{Carolin Kreisbeck}
\address{Mathematisch Instituut, Universiteit Utrecht, Postbus 80010, 3508 TA Utrecht, The Netherlands}
\email{C.Kreisbeck@uu.nl}
\begin{document}

\maketitle
 \begin{abstract}  
 \vspace{-12pt} 
We use variational convergence to derive a hierarchy of one-dimensional rod theories, starting out from three-dimensional models in nonlinear elasticity subject to local volume-preservation. 
The densities of the resulting $\Gamma$-limits are determined by minimization problems 
with a trace constraint that arises from the linearization of the determinant condition of incompressibility. 
While the proofs of the lower bounds rely on suitable constraint regularization, the upper bounds require a careful, explicit construction of locally volume-preserving recovery sequences. After decoupling the cross-section variables with the help of divergence-free extensions, we apply an inner perturbation argument to enforce the desired non-convex determinant constraint. 
To illustrate our findings, we discuss the special case of isotropic materials.  
\vspace{8pt}

\noindent\textsc{MSC (2020):} 49J45 (primary) $\cdot$ 	74K10
 
\noindent\textsc{Keywords:} dimension reduction, $\Gamma$-convergence, Euler-Lagrange equations, incompressibility, rods

 \noindent\textsc{Date:} \today.
 \end{abstract}


\section{Introduction}

The study of the deformation behavior of thin structures in response to external forces dates back centuries, with pioneering contributions on the bending of elastic rods by Euler and Bernoulli, and the formulation of a plate theory by Kirchhoff.
And still, nowadays, the topic has not lost any of its relevance, as numerous modern applications in technology and
recent developments in the life sciences demonstrate. Thinking, for instance, of carbon nanowires and 
printed electronics in computer devices, of fiber-reinforced and layered composites in materials science, or of
cell membranes and  DNA strands 
in biology, current research directions require a profound understanding of
elastic bodies with a small extension in one or two spatial dimensions.

Whereas classical approaches based on asymptotic expansions had been remarkably fruitful in the small-strain setting of linear elasticity, see e.g.~\cite{Cia97,TrV96}, accounting for large deformations calls for a mathematical framework that is well-suited to deal with geometric nonlinearity. 
In a variational convext, dimension reduction via $\Gamma$-convergence \cite{Bra02, Dal93} allows to establish a  rigorous connection between fully three-dimensional models in hyperelasticity and lower-dimensional theories for thin structures.

The first results in this spirit are
due to Acerbi, Buttazzo \& Percivale \cite{ABP91}, who proved a $3$d-$1$d reduction for elastic strings, and to Le Dret \& Raoult \cite{LeR95,LeR96}, who deduced a model for two-dimensional elastic membranes from the $\Gamma$-limit of elastic energy functionals for vanishing thickness. 
A few years later, an ansatz-free derivation of Kirchhoff's plate theory was obtained independently in~\cite{FJM02, Pan03}. These  seminal findings, in particular, the quantitative geometric rigidity estimate by Friesecke, James \& M\"uller~\cite{FJM02}, actuated substantial efforts towards a systematic analysis of different types of thin structures, with contributions by many authors. We highlight here a few selected examples. Considering that the scaling of the acting external forces has a decisive influence on the resulting lower-dimensional models, a complete hierarchy of plate models was  derived in~\cite{FJM06};  
details about the asymptotic analysis one-dimensional objects, precisely strings and rods, in various scaling regimes can be found in~\cite{MoM03, MoM04, Sca09}. Recently, 
various special features of thin structures have been investigated, including small-scale heterogeneities in plates and rods, which require a combination of dimension reduction and homogenization techniques~\cite{HNV14, Neu12, NeV13}, global invertibility aiming to avoid self-interpenetration of matter~\cite{Bar19, Hor11, OlR17}, or thin objects made of materials with pre-existing strain~\cite{BLS16, CRS17b, CRS17a, KoO18}. 
While the previously mentioned works rely on $\Gamma$-convergence, and hence (assuming suitable compactness) imply that (almost) minimizers converge, we refer e.g.~to~\cite{BPV17, DaM12, MoM08, MoS12, MuP08} for statements on the convergence of equilibria.

An important class of thin structures are those made of incompressible materials, which are commonly used to describe rubber-like substances \cite{ACD07, DeD02, Ogd72}, and thus, occur e.g.~in blood vessels, tires, seat belts, etc.  
From an analytical point of view, there is recent work on membranes~\cite{CoD06}, plates in the Kirchhoff \cite{CoD09} and von K\'arm\'an regime  \cite{ChL13,LeL15}, hyperelastic shells~\cite{ABR19}, and strings~\cite{EnK19}. 
The challenge in the mathematical analysis of these models lies in the non-convex constraint imposed on the elastic energy functionals to guarantee local volume-preservation; precisely, the Jacobian determinant of admissible deformation fields has to be constant and equal to one, cf.~\cite{CoD15}.    

Our intention with this article is to close a gap in the literature by deriving a hierarchy of theories for incompressible rods, including, in particular, the Kirchhoff- and von K\'arm\'an-type cases.
To this end, we characterize the asymptotic behavior  in the limit of vanishing cross-section of suitably rescaled elastic energy functionals subject to a local volume-preservation constraint. 

The paper is organized as follows: In the remainder of the introduction, we give the precise problem formulation, announce the main results, give insight into our methodological approach, and specify relevant notation. 
As preliminaries, we discuss in Section~\ref{sec:densities} several properties of the limit densities arising through the dimension reduction procedures, and Section~\ref{sec:tools} 
collects the most important technical tools for proving the upper bounds. 
The core of this work, Section~\ref{alpha=2}, contains the main $\Gamma$-convergence result for elastic rods in the Kirchhoff regime under the assumption of incompressibility. We identify the reduced $\Gamma$-limit for shrinking cross-section, determine the corresponding Euler-Lagrange equations, and specify our findings for the case of isotropic materials. 
Finally, the asymptotic analysis for the three remaining scaling regimes is presented in Section~\ref{alpha>2}. 

\subsection{Problem formulation} 
Throughout the paper, let $L>0$ and $\omega\subset \R^2$ be a bounded, simply connected Lipschitz domain of unit measure,
i.e., $\Lcal^2(\omega) = |\omega| = \int_\omega \dd \tilde x=1$, such that
\begin{align}\label{centerofmass}
	\int_\omega x_2 \dd \tilde x = \int_\omega x_3 \dd \tilde x = \int_\omega x_2x_3 \dd \tilde x = 0;
\end{align}
see~Section~\ref{sec:notation}, in particular,~\eqref{xtilde}, for the use of notation.

For small $\eps>0$, we introduce $\Omega_\eps = (0,L)\times \eps \omega \subset \R^3$ as the reference configuration of a thin, incompressible body of length $L$ with cross-section $\eps\omega$, and define its total energy through the functional $\Ecal_\eps : H^1(\Omega_\eps;\R^3) \to \R_\infty:=\R\cup\{\infty\}$ with
\begin{align}\label{Eeps}
	\Ecal_\eps(v)= \int_{\Omega_\eps} W(\nabla v) \dd y - \int_{\Omega_\eps} f_\eps\cdot v \dd y, \qquad v\in H^1(\Omega_\eps;\R^3);
\end{align}
here, $W: \R^{3\times 3} \to [0,\infty]$ is the constrained elastic energy density given by 
\begin{align}\label{def_W}
	W(F) = \begin{cases} W_0(F) &\text{ for } \det F = 1,\\ \infty &\text{ otherwise,} \end{cases}
\end{align}
where $W_0: \R^{3\times 3} \to [0,\infty)$ satisfies the following hypotheses:
\begin{itemize}
	\item[(H1)] $W_0$ is twice continuously differentiable in a neighborhood of $\SO(3)$;
	\item[(H2)] $W_0(\Id)=0$ and there is $C_1>0$ such that $W_0(F) \geq C_1\dist^2(F,\SO(3))$ for all $F\in \R^{3\times 3}$;
	\item[(H3)] $W_0$ is frame indifferent, i.e., $W_0(RF) = W_0(F)$ for all $F\in\R^{3\times 3}$ and $R\in\SO(3)$.
\end{itemize}

The vector field $f_\eps\in L^2(\Omega_\eps;\R^3)$ in~\eqref{Eeps} describes the external forces acting on the body. For the sake of simplicity, $f_\eps$ is assumed to be independent of the cross-section variables, and can therefore be interpreted as an element of $L^2(0,L;\R^3)$. Regarding the scaling properties of $f_\eps$, we suppose the existence of an $\alpha\geq 0$ and a suitable $f\in L^2(0,L;\R^3)$ such that 
\begin{align}\label{alpha}
f_\eps = \epsilon^\alpha f;
\end{align} 
we suppose that the body forces
average out to zero, i.e.,
\begin{align}\label{hyp1_f}
	\int_0^L f \dd x_1= 0.
\end{align}

With the intended asymptotic analysis of the energies in~\eqref{Eeps} in mind, it is technically convenient to perform a change of variables that allows us to replace $\Ecal_\eps$ with functionals defined on a fixed, parameter-independent space.  
Indeed, with $y=(x_1,\eps x_2, \eps x_3)$ for $x=(x_1,x_2,x_3)\in \Omega:=\Omega_1$, and $u\in H^1(\Omega;\R^3)$ given by
$u(x) = v(y)$ for $v\in H^1(\Omega_\eps;\R^3)$, the normalization of $\Ecal_\eps$ per unit volume turns into $\Jcal_\eps: H^1(\Omega;\R^3)\to \R_\infty$,
\begin{align*}
	\Jcal_\eps(u)= \int_\Omega W(\nabla^\eps u) \dd x -  \int_\Omega f_\eps \cdot u \dd x, \qquad u\in H^1(\Omega;\R^3),
\end{align*}
with the rescaled deformation gradient 
\begin{align*}
\nabla^\eps u = (\partial_1 u| \tfrac{1}{\eps} \partial_2 u| \tfrac{1}{\epsilon} \partial_3 u).
\end{align*}

It is well-known that the scaling behavior of $\Jcal_\eps$ depends on the parameter-dependence of the external forces; with $\alpha$ as in~\eqref{alpha}, one has that
\begin{align*}
\Jcal_\eps \sim \begin{cases} \alpha & \text{if $\alpha\in [0,2)$,}\\ 
2\alpha-2 & \text{if $\alpha\geq 2$.}\end{cases}
\end{align*} 
see~\cite{FJM06} for more details.

The scaling regimes $\alpha\in[0,2)$, which give rise to (degenerate) models for incompressible strings, are studied in~\cite{EnK19}. In this paper, we focus on the cases $\alpha\geq 2$ to deduce a hierarchy of theories for incompressible rods. Hence, the relevant functionals for us to work with are the rescaled energies
\begin{align}\label{Jcal_eps^alpha}
	\Jcal_\epsilon\ui{\alpha} : H^1(\Omega;\R^3)\to \R_\infty,\quad u\mapsto \frac{1}{\eps^{2\alpha-2}}\int_\Omega W(\nabla^\eps u) \dd x - \frac{1}{\epsilon^{\alpha-2}}\int_\Omega f \cdot u\dd x,
\end{align}
or, if we intend to consider exclusively the elastic energy contribution, 
\begin{align}\label{Ical_eps^alpha}
	\Ical_\epsilon\ui{\alpha} : H^1(\Omega;\R^3)\to [0,\infty],\quad u\mapsto \frac{1}{\eps^{2\alpha-2}}\int_\Omega W(\nabla^\eps u) \dd x.
\end{align}
The main results of this paper are characterizations of the $\Gamma$-limits $\Ical\ui{\alpha}$ 
of the sequences of energy functionals $(\Ical^\alpha_\epsilon)_{\epsilon}$
for all $\alpha \geq 2$ (see Theorem~\ref{theo:rods=2} and Theorem~\ref{theo:rods>2}); the explicit formulas of $\Ical^{(\alpha)}$ in the four qualitatively different regimes $\alpha=2$, $\alpha\in (2,3)$, $\alpha=3$, and $\alpha>3$ can be found in~\eqref{I^2},~\eqref{I^2-3} and~\eqref{I^3}, respectively. 

\subsection{Approach and techniques}\label{subsec:approach}
We adopt and tailor the methodology from \cite{CoD09} for incompressible plates to our situation of $3$d-$1$d dimension reduction. In doing so, it is essential to exploit the available results in \cite{MoM03, MoM04, Sca09} on the asymptotic analysis of compressible rods in the various scaling regimes.
In the following, we give a brief overview of the ideas behind the three steps for proving $\Gamma$-convergence, namely, compactness, lower and upper bound. For a comprehensive introduction to variational convergence and its properties, see e.g.~\cite{Bra02, Dal93}.

All compactness properties emerge as an immediate consequence of the literature on the respective compressible cases, considering that $W_0\leq W$, cf.~\eqref{def_W}. 

The key ingredient for the lower bound is a suitable approximation of the determinant constraint with the help of a suitable penalization term.  To be more precise, we consider for each $k\in\N$ a penalized energy density $W_k:\R^{3\times 3}\to [0, \infty)$ given by
\begin{align}\label{penalty_density}
	W_k(F) =  W_0(F) + \frac{k}{2}(\det F - 1)^2, \qquad F\in \R^{3\times 3};
\end{align}
clearly, the sequence $(W_k)_k$ is increasing, and converges pointwise to $W$ for $k\to \infty$. 
Since each $W_k$ meets the requirement for densities in the limit theory of compressible rods, the $\Gamma$-limits of the functionals $(\Ical\ui{\alpha}_{k,\eps})_{k,\eps}$
with 
\begin{align}\label{penalty_energy}
	\Ical\ui{\alpha}_{k,\eps}(u)= \frac{1}{\eps^{2\alpha-2}} \int_\Omega W_k(\nabla^\eps u) \dd x, \qquad u\in H^1(\Omega;\R^3),
\end{align}
are well-established; let us call the above-mentioned $\Gamma$-limits $\Ical\ui{\alpha}_k$. Showing that the pointwise limit of these $\Ical_k\ui{\alpha}$ for $k\to \infty$ is exactly $\Ical\ui{\alpha}$ yields the desired liminf inequality; our proof is based on the monotonicity and pointwise convergence of corresponding limit densities, which are in general defined only implicitly via infinite-dimensional minimization problems, cf.~Corollary~\ref{cor:convergence_Q}.

The upper bounds require the construction of energetically optimal approximating sequences of locally volume-preserving deformations for any admissible limit state. As a starting point, we take a sequence $(y_\eps)_\eps$ inspired by the recovery sequences from the unconstrained settings for the finite-valued density $W_0$ (under consideration of the respective scaling regime). 
These recovery sequences typically involve higher-order terms in $\eps$ that are determined by the solutions to the variational problems arising in the definition of the limit densities.
In our incompressible setting, the corresponding minimization problem features a trace constraint - connected with the determinant constraint through linearization - from which we can deduce that
\begin{align}\label{det_intro}
\det \nabla^\eps y_\eps - 1 \sim \eps^\gamma
\end{align}
for some $\gamma>0$ depending on the scaling regime $\alpha$.

With~\eqref{det_intro} at hand, an inner perturbation argument in the spirit of~\cite{CoD06}, adapted for $3$d-$1$d reductions in \cite[Lemma 2.3]{EnK19} (see Lemma~\ref{lem:reparam_det=1} below), allows us to replace $(y_\eps)_\eps$ by a sequence that satisfies the incompressibility constraint exactly. 
We remark that this auxiliary result is only applicable if the cross-section variables of $y_\eps$ are decoupled, for which 
we extend $y_\eps$ suitably to a cuboid containing $\Omega$. Technically, this task reduces to finding a divergence-free extension (see e.g. \cite{KMPT00}) in the cross-section direction.

Overall, our analysis shows that the following diagram commutes:
\begin{figure}[h!]
	\centering
	\begin{tikzpicture}[scale=0.75]
		\begin{scope}
			\draw [->] (1,0) -- (4,0);
			\draw [->] (0,-1) -- (0,-4);
			\draw [->] (1,-5) -- (4,-5);
			\draw [->] (5,-1) -- (5,-4);
			\draw (0,0) node {$\Ical_{k,\epsilon}\ui{\alpha}$};
			\draw (5,0) node {$\Ical_\epsilon\ui{\alpha}$};
			\draw (0,-5) node {$\Ical_k\ui{\alpha}$};
			\draw (5,-5) node {$\Ical\ui{\alpha}$};
			\draw (2.5,0) [anchor = south] node {$k\to \infty$};
			\draw (2.5,-5) [anchor = north] node {$k\to \infty$};
			\draw (0,-2.25) [anchor = east] node {$\Glim_{\epsilon\to 0}$};
			\draw (5,-2.25) [anchor = west] node {$\Glim_{\epsilon\to 0}$};
		\end{scope}
	\end{tikzpicture}
\end{figure}

\subsection{Notation}\label{sec:notation}
	The following notations are used throughout the paper. 
	Let $e_1,e_2,e_3$ be the standard unit basis vectors in $\R^3$, and let $a^\perp:=(-a_2, a_1)$ for $a\in \R^2$. For any two vectors $a, b\in \R^n$, we denote by $a\cdot b$ their standard inner product and by $a\otimes b \in \R^{n\times n}$ their tensor product, that is, componentwise, $(a\otimes b)_{ij} = a_ib_j$ for $i,j=1,\ldots, n$. 
	The inner product on the space of matrices $\R^{m\times n}$ is given by $A:B= \tr(AB^T)$ for $A,B\in\R^{m\times n}$, where $\tr$ is the trace operator and $B^T$ the transpose of $B$. The induced norms on $\R^n$ and $\R^{m\times n}$ are both denoted by $|\cdot|$.
	Moreover, $A^{\sym} = \frac{1}{2}(A^T+A)$ refers to the symmetric part of $A\in \R^{n\times n}$, we use $\Id$ for the identity matrix in $\R^{n\times n}$, $\SO(n)\subset \R^{n\times n}$ is the rotation group, and $\R^{n\times n}_{\skw}$ stands for the space of skew-symmetric $n\times n$ matrices. 
	If $g:\R^{3\times 3}\to \R$, $b\in\R^3$ and $A\in \R^{3\times 2}$, we simplify the expression $g((b|A))$ to $g(b|A)$.

	Furthermore, for $x=(x_1,x_2,x_3)\in\R^3$, we write $x=(x_1,\tilde x)$ with 
	\begin{align}\label{xtilde}
	\tilde x = (x_2,x_3)\in\R^2;
	\end{align} 
 in particular, the points of any subset of $U\subset \R^2$ are addressed by $\tilde x\in U$.
	Likewise, we split the components of $\R^3$-valued maps, that is $w=(w_1, \tilde w)$ for $w:U\subset \R^m\to \R^3$. 
	We denote the partial derivative of a function $w: U\subset \R^3 \to \R^m$ with respect to $x_i$ for $i=1,2,3$ by $\partial_i w$. 
	If $w$ depends solely on the $x_1$-variable, we use $\partial_1 w$ and $w'$ interchangeably. The gradient of $w$ is often split like 
	\begin{align*}
	\nabla w = (\partial_1 w| \tnabla w)\quad \text{ with}\quad \tnabla w := (\partial_2 w| \partial_3 w).
	\end{align*}
	The rescaled gradient of $w$ can then be expressed as $\nabla^\eps w = (\partial_1 w| \tfrac{1}{\eps}\tnabla w)$. 
	Note that whenever a function is defined on a subset of $\R^2$, we call its two-dimensional independent variable $\tilde x=(x_2, x_3)$, and we write $\tnabla$, $\widetilde\Delta$, and $\widetilde {\diverg}$ to indicate its gradient, Laplacian and divergence.

	If $U$ is a subset of $\R^n$, then $\overline U$ is its closure, and $\mathcal L^n(U)$, or simply $|U|$, denotes its Lebesgue measure (provided $U$ is measurable). 
	 For any open $U\subset \R^n$, we adopt the standard notation for vector-valued Sobolev spaces $H^1(U;\R^m)$ and the space of $k$-times continuously differentiable functions $C^k(\overline{U};\R^m)$. 
	The space of Lebesgue-square-integrable Banach-space-valued functions is denoted by $L^2(U;\Vcal)$ for a Banach space $\Vcal$. 
	In the case where $U$ is an interval $(a,b)\subset \R$, we shorten the notation $L^2(a,b;\Vcal):= L^2((a,b);\Vcal)$ and $H^1(a,b;\R^m):=H^1((a,b);\R^m)$. For scalar-valued functions, we often drop the image space in our notation, writing e.g.,~$H^1(U)$ instead of $H^1(U;\R)$.
	Without explicit mention, functions $(0,L)\to \R^m$ are identified with their constant extension onto $(0,L)\times U$ for $U\subset \R^2$.

	Furthermore, for any open subset $U\subset \R^2$, let
	\begin{align*}
		 H^1_{\div}(U;\R^3)= \{w\in H^1(U;\R^3) : \partial_2 w_2 + \partial_3 w_3 = 0 \text{ a.e. in $U$}\}
	\end{align*}
	and
	\begin{align*}
		L^2_0(U;\R^m) = \Bigl\{w\in L^2(U;\R^m) :   \int_U w \dd x = 0 \Bigr\}.
	\end{align*}
Moreover, we identify $L^2(0, L;H^1(U;\R^3))$ with a function in $L^2((0, L)\times U;\R^3)$, and similarly for other spaces.
	
	We employ the standard notation $\Ocal(\cdot)$ and $o(\cdot)$ for the Landau symbols.
	Finally, speaking of ``sequences'' with index $\eps>0$, means that $\eps$ can stand for any non-negative sequence $(\eps_j)_j$ with $\lim_{j\to \infty} \eps_j=0$.

	\section{Properties of the limit densities}	\label{sec:densities}
Here, we introduce and discuss relevant expressions for the formulation of the reduced limit problems, meaning the $\Gamma$-limits $\Ical^{(\alpha)}$ for $\alpha\geq 2$.

We start by defining $Q:\R^{3\times 3}\to [0,\infty)$ as the quadratic form of linearized elasticity resulting from the second derivative of the energy density $W_0$ at the identity, i.e., 
\begin{align*}
Q(F) = \nabla^2 W_0(\Id)[F, F]\quad \text{ for $F\in \R^{3\times 3}$},
\end{align*} 
cf.~(H1).
Due to (H2), Taylor expansion around the identity up to second order yields
\begin{align}\label{Taylor}
W_0(F)= \tfrac{1}{2}Q(F) +  \Ocal(|F-\Id|^3), 
\end{align}
and
along with (H3), one has that
\begin{align}\label{estimate_Q}
		Q(F) = Q(F^{\rm sym})\geq C_Q|F^{\sym}|^2,
\end{align}
for all $F\in \R^{3\times 3}$ with a constant $C_Q>0$, see e.g.~\cite{FJM02, MoM03}. 
In the following, we denote by $L$ the symmetric fourth order tensor such that
\begin{align}\label{def_Lcal}
	Q(F) = L F : F
\end{align}
	for all $F\in\R^{3\times 3}$.
	
Next, for any affine $\xi:\R^2\to \R^3$, we define $Q^\xi: H^1(\omega;\R^3)
 \to [0, \infty]$ by setting
			\begin{align}\label{Qxi}
			Q^\xi(\beta) =  \begin{cases} \displaystyle\int_\omega Q(\xi| \tnabla \beta)\dd \tilde x &\text{ if } \tr(\xi|\tnabla \beta) = 0 \text{ a.e.~in $\omega$},\\ \infty &\text{ otherwise,}\end{cases}
		\end{align}
		for $\beta\in H^1(\omega;\R^3)$. 
		Moreover, we consider the linear space subspace $\Vcal^\xi$ of $H^1(\omega;\R^3)\cap L^2_0(\omega;\R^3)$ that encompasses all functions with the property 
		\begin{align*}
	\displaystyle \int_{\omega} \widetilde \nabla \beta \dd{\tilde x} = 0  \quad \text{if $\xi(0)=0$} \qquad \text{and}\qquad
		\displaystyle \int_{\omega} \tilde x^\perp\cdot \tilde \beta \dd{\tilde x} = 0
		\quad \text{if $\xi(0)\neq 0$;} 
		\end{align*} 		
	recall the notation $\beta=(\beta_1, \tilde \beta)$. 
 	With this choice of spaces, Korn's inequality holds in the following form: There exists a constant $C_K>0$ depending only on $\omega$ such for all $\beta\in \Vcal^\xi$, 
		\begin{align}\label{Korn}
			\norm{\bigl(\tnabla \tilde \beta\bigr)^{\sym}}_{L^2(\omega;\R^{2\times 2})} \geq C_K\norm{\tnabla\tilde \beta}_{L^2(\omega;\R^{2\times 2})};
		\end{align}
		indeed, if $\xi(0)=0$, it suffices to invoke the well-known mean-value version of Korn's inequality (see e.g.~\cite{Fri47}). In the case $\xi(0)\neq 0$, on the other hand, one observes that $\Vcal^\xi$ contains no non-trivial infinitesimal rigid displacements (cf.~also \cite[Remark~4.1]{MoM04}), and hence,~\eqref{Korn} follows from~\cite[Theorem~4.4]{KoO88}. 

		 The next results provides the existence of a unique solution to the problem of minimizing the functional $Q^\xi$ from~\eqref{Qxi}.
		\begin{lemma}[Minimization of \boldmath{$Q^\xi$}]\label{lem:existence}
		Let $\xi:\R^2\to \R^3$ be affine. Then, the functional $Q^\xi$ has a unique minimizer with zero mean value, called $\beta^\xi$, which lies in $\Vcal^\xi$.
		\end{lemma}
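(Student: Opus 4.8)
The plan is to set up the minimization of $Q^\xi$ as a constrained quadratic minimization problem on a closed affine subspace of a Hilbert space and invoke the direct method together with strict convexity. First I would fix the affine map $\xi$ and restrict attention to competitors $\beta$ for which $Q^\xi(\beta)<\infty$, i.e. those satisfying the trace constraint $\tr(\xi|\tnabla\beta)=0$ a.e.\ in $\omega$; if no such $\beta$ exists the statement is vacuous, but in fact $\beta=0$ together with a suitable choice already shows the constraint set is nonempty since $\tr(\xi|\tnabla 0)=\partial_1\xi_1$ is constant and we may subtract an affine function of $\tilde x$ — more carefully, one exhibits an explicit affine $\tilde x\mapsto$ (something linear) whose $\tnabla$ has the prescribed trace, so the constraint set is a nonempty affine subspace of $H^1(\omega;\R^3)$. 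The key reduction is that on the quotient by infinitesimal rigid displacements, or equivalently on the space $\Vcal^\xi$ defined above, the functional is coercive: by \eqref{estimate_Q} one has $Q(\xi|\tnabla\beta)\geq C_Q|(\xi|\tnabla\beta)^{\sym}|^2\geq C_Q|(\tnabla\tilde\beta)^{\sym}|^2$, and then Korn's inequality \eqref{Korn} upgrades this to control of $\|\tnabla\tilde\beta\|_{L^2}^2$; combined with the trace constraint (which ties $\partial_1\beta_1$ to $\tnabla\tilde\beta$ pointwise) and the mean-zero normalization via Poincaré, one gets $Q^\xi(\beta)\geq c\|\beta\|_{H^1(\omega;\R^3)}^2$ for $\beta\in\Vcal^\xi$.

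\textbf{Key steps, in order.} (1) Show the admissible set $\{Q^\xi<\infty\}$ is nonempty by constructing one competitor with the right trace. (2) Observe that adding to any $\beta$ an element of the kernel directions that are quotiented out in $\Vcal^\xi$ changes neither $Q^\xi(\beta)$ (frame indifference / the structure of $Q$ on symmetric parts, \eqref{estimate_Q}) nor membership in the constraint set, so that minimizing $Q^\xi$ over all of $H^1(\omega;\R^3)\cap L^2_0$ is equivalent to minimizing over $\Vcal^\xi\cap\{Q^\xi<\infty\}$; this is where the precise definition of $\Vcal^\xi$ — splitting into the cases $\xi(0)=0$ and $\xi(0)\neq 0$ — is used, to kill exactly the right space of infinitesimal rigid displacements. (3) On $\Vcal^\xi$, use \eqref{estimate_Q} and \eqref{Korn} to get coercivity in the $H^1$-norm as sketched above, and note that the constraint set is weakly closed (it is the preimage of $\{0\}$ under the bounded linear map $\beta\mapsto\tr(\xi|\tnabla\beta)\in L^2(\omega)$). (4) Apply the direct method: take a minimizing sequence, extract a weakly convergent subsequence in $\Vcal^\xi$ (coercivity gives boundedness), pass to the limit using weak lower semicontinuity of the convex functional $\beta\mapsto\int_\omega Q(\xi|\tnabla\beta)$ and weak closedness of the constraint; the limit is a minimizer, which after subtracting its mean lies in $\Vcal^\xi$ and has zero mean value. (5) Uniqueness: the map $\beta\mapsto\int_\omega Q(\xi|\tnabla\beta)$ is strictly convex on $\Vcal^\xi$ because by \eqref{estimate_Q} and Korn it is bounded below by $c\|\tnabla\tilde\beta\|_{L^2}^2$, so two minimizers would have to agree in $\tnabla\tilde\beta$, and then the trace constraint forces $\partial_1\beta_1$ to agree as well, while $\tnabla\beta_1$'s remaining components and the mean-zero condition pin down $\beta$ completely within $\Vcal^\xi$ — care is needed here because $Q$ only sees the symmetric part, so strict convexity is genuinely an argument about $\Vcal^\xi$, not a formality.

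\textbf{Main obstacle.} The delicate point is step (2) combined with the uniqueness argument in step (5): because $Q$ depends only on $(\xi|\tnabla\beta)^{\sym}$, the functional is \emph{not} strictly convex on the full space $H^1(\omega;\R^3)$, and one must verify that $\Vcal^\xi$ is precisely large enough that the minimizer is unique there, yet the quotient map does not lose any genuine competitors. Getting the bookkeeping right — that the two cases $\xi(0)=0$ vs.\ $\xi(0)\neq 0$ eliminate exactly the infinitesimal rigid displacements compatible with the trace constraint, and that Korn's inequality \eqref{Korn} is available in exactly the form stated on $\Vcal^\xi$ — is the crux; once coercivity on $\Vcal^\xi$ is in hand, existence is routine direct method and uniqueness follows from strict convexity of the resulting coercive quadratic form.
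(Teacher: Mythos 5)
Your overall route --- reduce the minimization to $\Vcal^\xi$, obtain coercivity there from \eqref{estimate_Q}, Korn's inequality \eqref{Korn} and Poincar\'e, run the direct method, and get uniqueness from strict convexity of $Q$ on symmetric matrices --- is the same as the paper's. But the justification you give for the key reduction step (2) is incorrect in the case $\xi(0)=0$, and that is exactly where the work lies. For linear $\xi$, the space $\Vcal^\xi$ is cut out by $\int_\omega \beta \dd \tilde x=0$ \emph{and} $\int_\omega \tnabla\beta \dd \tilde x=0$, so the correction needed to project a competitor $\beta$ into $\Vcal^\xi$ is the full affine map $\eta(\tilde x)=A\tilde x+b$ with $A=\int_\omega\tnabla\beta\dd \tilde x$ and $b=\int_\omega\beta\dd \tilde x$. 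Subtracting such an $\eta$ is \emph{not} energy-neutral: $Q$ is invariant only under adding matrices $(0|A)$ with vanishing symmetric part (i.e.\ $A=\nu(e_3|-e_2)$) plus constants, a strictly smaller set of directions than those quotiented out by $\Vcal^\xi$. Hence ``adding the quotiented directions changes neither the energy nor the constraint'' fails, and with it your claimed equivalence of the two minimizations. What is true, and what the paper proves by an explicit computation, is the inequality $Q^\xi(\beta-\eta)\le Q^\xi(\beta)$: one first checks that the correction is admissible, $\tr(0|A)=A_{21}+A_{32}=\int_\omega\widetilde\diverg\,\tilde\beta\dd \tilde x=-\int_\omega\xi_1\dd \tilde x=0$ (this uses the trace constraint together with \eqref{centerofmass}), and then, using $|\omega|=1$ and $\int_\omega\xi\dd \tilde x=0$, computes the cross term to find $Q^\xi(\beta-\eta)=Q^\xi(\beta)-Q(0|A)\le Q^\xi(\beta)$. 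This gives \eqref{minmin}, i.e.\ equality of the infima, which is all that is needed; note in particular that global minimizers over $H^1(\omega;\R^3)$ need \emph{not} lie in $\Vcal^\xi$, so ``no genuine competitors are lost'' is only correct at the level of infimum values. In the case $\xi(0)\neq0$ your invariance argument is fine, since there the correction is a constant plus a skew cross-sectional rotation.

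Two smaller inaccuracies: $\beta$ depends only on $\tilde x\in\omega$, so there is no $\partial_1\beta_1$; the first column of $(\xi|\tnabla\beta)$ is the datum $\xi$, and the constraint reads $\xi_1+\widetilde\diverg\,\tilde\beta=0$. Accordingly, in the uniqueness step, equality of the symmetric parts $(\xi|\tnabla\beta)^{\sym}$ determines $\tnabla\beta_1$ outright (since $\xi$ is fixed), while $\tilde\beta$ is determined only up to a two-dimensional infinitesimal rigid displacement; it is precisely the normalizations built into $\Vcal^\xi$ (zero mean, plus either $\int_\omega\tnabla\beta\dd \tilde x=0$ or the $\tilde x^\perp$-moment condition) that remove this kernel, so the uniqueness you obtain is uniqueness within $\Vcal^\xi$, as in the paper.
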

		
		\begin{proof} 
	Let us start by observing that the constraint in $Q^\xi$ is invariant under certain affine translations; precisely, if 
		\begin{align}\label{eta}
		\eta(\tilde x) = A\tilde x +b \quad\text{ for $\tilde x\in \omega$ with $A\in \R^{3\times 2}$ such that $A_{21}+A_{32}=0$ and $b\in \R^3$,}
		\end{align}
		 then for any $\beta\in H^1(\omega;\R^3)$,
\begin{align*}
\tr(\xi|\widetilde \nabla (\beta - \eta)) =\tr(\xi|\widetilde\nabla \beta) - \tr(0|A)= \tr(\xi|\widetilde \nabla \beta).
\end{align*}  

Next, we prove that
	\begin{align}\label{minmin}
		 \inf_{\beta\in H^1(\omega;\R^3)} Q^\xi(\beta)=	\inf_{\beta\in \Vcal^\xi} Q^\xi(\beta).
	\end{align}
To this end, it suffices to show that one can find for any $\beta\in H^1(\omega;\R^3)$ with $\tr(\xi|\widetilde  \nabla \beta)=0$ a function $\eta$ as in~\eqref{eta} such that
\begin{align}\label{est1}
\beta-\eta\in \Vcal^\xi\quad \text{ and } \quad Q^\xi(\beta-\eta)\leq Q^\xi(\beta). 
\end{align}	

Indeed, for linear $\xi$, meaning $\xi(0)=0$, we specialize the coefficients in~\eqref{eta} to $b=\int_\omega \beta\dd \tilde x$ and $A=\int_{\omega} \widetilde \nabla \beta \dd \tilde x$; notice that $A_{21} +A_{32} = \int_\omega \widetilde \diverg \tilde \beta \dd{\tilde x}= -\int_\omega \xi_1\dd{\tilde x} = 0$ and $\int_\omega \beta-\eta \dd \tilde x=0$ due to~\eqref{centerofmass}. Clearly, $\int_\omega \widetilde \nabla (\beta-\eta)\dd{\tilde x}=0$, and along with $|\omega|=1$ and~\eqref{estimate_Q}, we conclude that
\begin{align*}
 Q^\xi(\beta-\eta) &= Q^\xi(\beta) +  Q(0|A) -2 \int_\omega L(\xi|\widetilde \nabla \tilde \beta) : (0|A)\dd{\tilde x} \\ &= \int_\omega Q^\xi(\beta)\dd{\tilde x} - Q(0|A)\leq  \int_\omega Q^\xi(\beta)\dd{\tilde x},
\end{align*}
recalling~\eqref{def_Lcal}. 

Otherwise, if $\xi(0)\neq 0$, take $\eta$ as in~\eqref{eta} with $A=\nu (e_3|-e_2)$ and
\begin{align*}
\nu= \frac{\int_\omega \beta\cdot\widetilde x^\perp \dd{\tilde x}}{\int_\omega |\tilde x|^2\dd{\tilde x}},
\end{align*}
as well as a translation vector $b = \int_\omega \beta \dd{\tilde x}$.
By construction, $\beta-\eta\in \Vcal^\xi$, and~\eqref{estimate_Q} in combination with the antisymmetry of $(0|A)$ implies $Q^\xi(\beta-\eta) = Q^\xi(\beta)$. 
This proves~\eqref{est1}, and thus, also~\eqref{minmin}. 

	 The existence of a minimizer of $Q^\xi$ in $\Vcal^\xi$ is a straight-forward application of the direct method, given~\eqref{estimate_Q} in combination with~\eqref{Korn} and Poincar\'e's inequality, as well as the quadratic and linear structure of $Q$ and the trace-constraint, respectively. In view of~\eqref{minmin}, then also $Q^\xi$ has a minimizer in $H^1(\omega;\R^3)$, whose uniqueness up to translations follows form the the strict convexity on symmetric matrices of the integrand of $Q^\xi$.
		\end{proof}
	
The following two remarks provide some additional insight into the properties of the minimizers $\beta^\xi$ of $Q^\xi$. First, we derive necessary conditions for the minimizers $\beta^\xi$ of $Q^\xi$ in the form of (weak) Euler-Lagrange equations; for related statements in the context of compressible rods, see~\cite[Remark 3.4]{MoM03} and \cite[Remark 4.1]{MoM04}. The second aspect concerns the linear and continuous dependence of $\beta^\xi$ on $\xi$.

	\begin{remark}[Euler-Lagrange equations]\label{rem:EL} 
	Let $\xi:\R^2\to \R^3$ be an affine function.
	
	 As a consequence of the Lagrange-multiplier theory for constrained optimization (see e.g.~\cite[Theorem 3.63]{Pey15}), 
		we obtain that $\beta$ is a minimizer of $Q^\xi$ if and only if
	
\begin{itemize}
	\item[(i)] the Euler-Lagrange equations
		\begin{align}\label{Lagrange_equation}
			\int_\omega L\big(\xi| \tnabla\beta\big):\big(0| \tnabla\phi\big)\dd \tilde x = 
			-\frac{1}{2} \int_\omega \lambda^\xi\; \widetilde{\diverg} \tilde \phi \dd \tilde x 
		\end{align}
		hold for all test functions $\phi=(\phi_1, \tilde \phi)\in H^1(\omega;\R^3)$ with a function $\lambda^\xi \in L^2(\omega)$, and 
		\item[(ii)] $\beta$ satisfies the trace condition $\tr(\xi| \widetilde \nabla \beta) = 0$, or equivalently,
			\begin{align}
			\widetilde \diverg \tilde \beta = -\xi_1.\label{trace_constraint}
			\end{align}
			\end{itemize}
		
		 Notice that the Lagrange-multiplier $\lambda^\xi$ is unique; this follows from the surjectivity of the divergence operator $\widetilde \diverg$ as a map from $H^1(\omega;\R^2)\to L^2(\omega)$, cf.~\cite[Chapter I, Corollary 2.4]{GiR86}. 

	\end{remark}
	
\begin{remark}[Linear and continuous dependence on \textbf{$\xi$}]\label{rem:H1}
The considerations in Remark~\ref{rem:EL} imply that both the Lagrange multiplier $\lambda^\xi \in L^2(\omega)$ and $\beta^\xi\in \Vcal^\xi$ (recall the definition in~Lemma~\ref{lem:existence}) depend linearly on $\xi$. Furthermore,
there exists a constant $C>0$ depending only on $Q$ and $\omega$ such that 
			\begin{align}\label{estimate_beta}
			\norm{\beta^\xi}_{H^1(\omega;\R^3)}\leq  C\norm{\xi}_{L^2(\omega;\R^3)}
		\end{align}
		holds for all affine $\xi:\R^2\to \R^3$. To see this, it is enough to exploit the minimality property of $\beta^\xi$ in conjunction with~\eqref{estimate_Q} and the inequalities of Korn and Poincar\'e.  
	Together with the last observation in Remark~\ref{rem:EL}, estimate~\eqref{estimate_beta} shows that $\xi\mapsto \beta^\xi$ is a bounded, linear map from the subspace of affine functions in $L^2(\omega;\R^3)$ into $H^1(\omega;\R^3)$, and as such also continuous. 
		\end{remark}

 We continue with a convergence statement that identifies $Q^\xi$ as the $\Gamma$-limit of a sequence of finite-valued functionals. 
	\begin{lemma}[\boldmath{$Q^\xi$} as a \boldmath{$\Gamma$}-limit]\label{lem:lower_bounds_abstract}
		For $\xi:\R^2\to \R^3$ affine and $k\in\N$, let $Q_k^\xi: H^1(\omega;\R^3) \to [0,\infty)$ by given by
		\begin{align}\label{Qxik}
			Q_k^\xi(\beta) = \int_\omega Q(\xi|\tnabla \beta)+k \tr(\xi|\tnabla \beta)^2 \dd \tilde x.
		\end{align}
		Then, $\Glim_{k\to \infty} Q_k^\xi = Q^\xi$ with respect to the weak topology in $H^1(\omega;\R^3)$.
		
		Moreover, every sequence $(\beta_k)_k\subset \Vcal^\xi$ with $\sup_{k\in \N}Q^\xi_k(\beta_k)<\infty$ admits a convergent subsequence (not relabeled) such that $\beta_k\weakly\beta$ in $H^1(\omega;\R^3)$ with  $\beta\in\Vcal^\xi$ and $\tr(\xi|\tnabla \beta)=0$.
	\end{lemma}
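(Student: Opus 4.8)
The plan is to exploit that $(Q_k^\xi)_k$ is pointwise non-decreasing with pointwise limit exactly $Q^\xi$, and read off the $\Gamma$-limit from this monotonicity. Indeed, if $\tr(\xi|\tnabla\beta)=0$ a.e.\ in $\omega$, then $Q_k^\xi(\beta)=\int_\omega Q(\xi|\tnabla\beta)\dd\tilde x=Q^\xi(\beta)$ for every $k$, while otherwise $\int_\omega\tr(\xi|\tnabla\beta)^2\dd\tilde x>0$ and hence $Q_k^\xi(\beta)\to\infty=Q^\xi(\beta)$. Since the $\Gamma$-limit of an increasing sequence of functionals equals the (sequentially weakly) lower semicontinuous envelope of its pointwise supremum (cf.\ \cite[Chapter~5]{Dal93}), it would suffice to show that $Q^\xi$ is sequentially weakly lower semicontinuous on $H^1(\omega;\R^3)$; for concreteness I would instead verify the $\liminf$- and recovery-sequence inequalities directly, using throughout the sequential characterization of $\Gamma$-convergence with respect to the weak $H^1$-topology.

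For the recovery sequence one takes the constant one, $\beta_k=\beta$: there is nothing to prove when $Q^\xi(\beta)=\infty$, and when $Q^\xi(\beta)<\infty$ the constraint $\tr(\xi|\tnabla\beta)=0$ holds a.e., so that $Q_k^\xi(\beta)=Q^\xi(\beta)$ for all $k$. For the $\liminf$ inequality, let $\beta_k\weakly\beta$ in $H^1(\omega;\R^3)$ and fix $m\in\N$; then $Q_k^\xi(\beta_k)\geq Q_m^\xi(\beta_k)$ for all $k\geq m$, since the penalty contribution in \eqref{Qxik} is non-negative and non-decreasing in the index. The functional $Q_m^\xi$ is the composition of the affine, $H^1$-to-$L^2$ continuous map $\beta\mapsto(\xi|\tnabla\beta)$ with the convex, continuous functional $G\mapsto\int_\omega\big(Q(G)+m\,(\tr G)^2\big)\dd\tilde x$ on $L^2(\omega;\R^{3\times3})$ (recall $Q$ is a non-negative quadratic form with $Q(G)\leq C|G|^2$), hence $Q_m^\xi$ is sequentially weakly lower semicontinuous on $H^1(\omega;\R^3)$. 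Therefore $\liminf_k Q_k^\xi(\beta_k)\geq\liminf_k Q_m^\xi(\beta_k)\geq Q_m^\xi(\beta)$, and letting $m\to\infty$ together with the pointwise-limit identity yields $\liminf_k Q_k^\xi(\beta_k)\geq\sup_m Q_m^\xi(\beta)=Q^\xi(\beta)$. The only genuinely non-soft ingredient here is this weak lower semicontinuity, which rests entirely on the convexity of $Q$.

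It remains to prove the compactness assertion. Let $(\beta_k)_k\subset\Vcal^\xi$ with $M:=\sup_k Q_k^\xi(\beta_k)<\infty$. Applying the coercivity estimate \eqref{estimate_Q} to $F=(\xi|\tnabla\beta_k)$ gives $C_Q\int_\omega|(\xi|\tnabla\beta_k)^{\sym}|^2\dd\tilde x\leq Q_k^\xi(\beta_k)\leq M$. Since the lower-right $2\times2$ block of $(\xi|\tnabla\beta_k)^{\sym}$ equals $(\tnabla\tilde\beta_k)^{\sym}$, the norm $\|(\tnabla\tilde\beta_k)^{\sym}\|_{L^2(\omega;\R^{2\times2})}$ is bounded, so Korn's inequality \eqref{Korn} — applicable precisely because $\beta_k\in\Vcal^\xi$ — bounds $\|\tnabla\tilde\beta_k\|_{L^2}$; the $(1,2)$- and $(1,3)$-entries of $(\xi|\tnabla\beta_k)^{\sym}$ then bound $\tnabla\beta_{k,1}$ in $L^2$ modulo the fixed affine data $\xi_2,\xi_3$, so $(\tnabla\beta_k)_k$ is bounded in $L^2(\omega;\R^{3\times2})$; Poincaré's inequality for the zero-mean functions $\beta_k$ finally bounds $(\beta_k)_k$ in $H^1(\omega;\R^3)$. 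Passing to a subsequence, $\beta_k\weakly\beta$ in $H^1(\omega;\R^3)$; as $\Vcal^\xi$ is a linear subspace of $H^1(\omega;\R^3)\cap L^2_0(\omega;\R^3)$ cut out by weakly continuous linear conditions, it is weakly closed and $\beta\in\Vcal^\xi$. Lastly, $M\geq k\int_\omega\tr(\xi|\tnabla\beta_k)^2\dd\tilde x$ forces $\tr(\xi|\tnabla\beta_k)=\xi_1+\widetilde\diverg\tilde\beta_k\to0$ strongly in $L^2(\omega)$, whereas weak $H^1$-convergence gives $\xi_1+\widetilde\diverg\tilde\beta_k\weakly\xi_1+\widetilde\diverg\tilde\beta$ in $L^2(\omega)$; uniqueness of weak limits yields $\tr(\xi|\tnabla\beta)=0$ a.e.\ in $\omega$. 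I expect the step requiring the most care to be exactly this coercivity/Korn bookkeeping — extracting a bound on the full in-plane gradient $\tnabla\beta_k$, including the longitudinal component $\beta_{k,1}$, from the symmetrized $3\times3$ matrix, and making sure that $\Vcal^\xi$ is indeed a space on which \eqref{Korn} holds — while everything else is essentially soft.
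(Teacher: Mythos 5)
Your proof is correct and follows essentially the same route as the paper: constant recovery sequences, weak lower semicontinuity coming from the convexity of $Q$ for the liminf inequality, and the coercivity estimate \eqref{estimate_Q} combined with Korn's inequality \eqref{Korn} on $\Vcal^\xi$ and Poincar\'e's inequality for the compactness statement. The only cosmetic difference is in the liminf step, where you compare with a fixed $Q_m^\xi$ and pass to the supremum over $m$, whereas the paper first extracts the trace constraint $\tr(\xi|\tnabla \beta)=0$ from the blow-up of the penalty term and then applies weak lower semicontinuity to $\int_\omega Q(\xi|\tnabla\cdot)\dd\tilde x$; both variants rest on the same ingredients.
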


	\begin{proof}Fix an arbitrary affine function $\xi:\R^2\to \R^3$. 
		
		\textit{Step~1: Liminf-inequality.}  Let $\beta_k\weakly \beta$ in $H^1(\omega;\R^3)$, and assume without loss of generality that
		\begin{align*}
			\infty>\liminf_{k\to \infty} Q_k^\xi(\beta_k) = \lim_{k\to\infty} Q_k^\xi(\beta_k).
		\end{align*}
Then, $\tr(\xi|\tnabla \beta_k)\to 0$ in $L^2(\omega)$, which implies in particular that $\tr(\xi|\tnabla \beta) = 0$.
	Since $Q$ is convex, we infer by weak lower semicontinuity that
		\begin{align*}
			\liminf_{k\to\infty} Q_k^\xi(\beta_k) \geq \liminf_{k\to\infty} \int_\omega Q(\xi|\tnabla \beta_k) \dd \tilde x 
				\geq \int_\omega Q(\xi|\tnabla \beta) \dd \tilde x = Q^\xi(\beta).
		\end{align*}
			
		\textit{Step 2: Limsup-inequality.} Let $\beta\in H^1(\omega;\R^3)$ such that $\tr(\xi|\tnabla\beta) = 0$ a.e.~in $\omega$. It is immediate to see that the constant sequence $(\beta_k)_k$ with $\beta_k = \beta$ for $k\in \N$ is a recovery sequence.

\textit{Step 3: Compactness.} Let $(\beta_k)_k\subset \Vcal^\xi$ be a sequence of uniformly bounded energy for $(Q_k^\xi)_k$. 
With the help of \eqref{estimate_Q},~\eqref{Korn}, and Young's inequality, we estimate for every $k\in \N$ that
 		\begin{align*}
			  Q_k^\xi(\beta_k) &\geq \int_\omega Q(\xi|\tnabla \beta_k) \dd \tilde x \geq C_Q \int_\omega  \big|\big(\xi|\tnabla \beta_k\big)^\sym\big|^2 \dd \tilde x \\ &  \geq  C_Q \norm{(\tnabla \tilde \beta_k)^{\sym}}_{L^2(\omega;\R^{2\times 2})}^2  + \frac{C_Q}{4} \norm{\widetilde \nabla(\beta_k\cdot e_1) }_{L^2(\omega;\R^{2})}^2 - \frac{C_Q}{2} \norm{\xi}_{L^2(\omega;\R^{3})}^2\\ 
			&	\geq C\norm{\tnabla \beta_k}_{L^2(\omega;\R^{3\times 2})} - \frac{C_Q}{2} \norm{\xi}_{L^2(\omega;\R^{3})}^2
		\end{align*}
		with constants $C=C_Q\min\{C_K^2, \frac{1}{4}\}$.
		Since $\beta_k\in \Vcal^\xi$ has vanishing mean value, we conclude from Poincar\'e's inequality that $(\beta_k)_k$ is bounded in $H^1(\omega;\R^3)$. Hence, the statement follows by the weak compactness of $H^1(\omega;\R^3)$ and the weak closedness of $\Vcal^\xi$.
	\end{proof}
	
	\begin{remark}[Minimizers of \boldmath{$Q_k^\xi$}]\label{minimizers_Qxik}
			Analogous arguments to those in Lemma~\ref{lem:existence} show that for every $k\in \N$, the unique minimizer of $Q_k^\xi$ with vanishing mean value is an element of $\Vcal^\xi$. 	\end{remark}

Next, we introduce some further notation that will be needed to express the
energy densities of the intended limit problems. 
Let $Q^\ast: \R^{3\times 3}_\skw \times \R \to [0,\infty)$ be given by
	\begin{align}\label{Qast}
		\begin{split}
			\Qld(F,t)= \min\bigl\{Q^\xi(\beta) : \beta\in H^1(\omega;\R^3), \xi(\tilde x) = F(x_2e_2+x_3e_3) +te_1 \text{ for $\tilde x\in \omega$}\bigr\}.
		\end{split}
	\end{align}
	Note that $\Qld$ is well-defined according to Lemma~\ref{lem:existence}. Moreover, owing to the linear dependence of $\beta^\xi$ on $\xi$, as deduced at the end of Remark~\ref{rem:EL}, and the properties of $Q$, $\Qld$ is a positive-definite quadratic form. 
		\begin{remark}[Additive splitting of \boldmath{$Q^\ast$}]\label{rem:splitting}
		In analogy to the compressible case (see~\cite[Remark~4.4]{MoM04}), $Q^\ast$ can be split additively into two quadratic expressions that depend only on either $F$ or $t$; precisely, it holds that 
		\begin{align*}
			Q^\ast(F,t) = Q^\ast(F, 0) + \alpha t^2
		\end{align*}
		for $(F,t)\in\R^{3\times3}_{\skw}\times \R$, 
	where $\alpha\in \R$ results from a finite-dimensional constrained quadratic optimization problem, namely,
			\begin{align*}
		\alpha= \min_{a,b\in\R^3,\;a_2 + b_3 = -1} Q(e_1|a|b). 
		\end{align*}
	\end{remark}
	
	As a direct consequence of Lemma~\ref{lem:lower_bounds_abstract} (see also Remark~\ref{minimizers_Qxik}) and the classical properties of $\Gamma$-convergence, which include the convergence of minima (see e.g.~\cite{Bra02, Dal93}), we derive a useful approximation for $Q^\ast$. The next result enters into the proof of the lower bounds, cf.~Theorem~\ref{theo:rods=2}\,$ii')$ and Theorem~\ref{theo:rods>2}\,$ii)$. 
	\begin{corollary}[Pointwise approximation of \boldmath{$\Qld$}]\label{cor:convergence_Q}
	 For $k\in \N$ and $(F,t)\in \R^{3\times 3}\times \R$, let
	 \begin{align}\label{Qkxi}
	 \Qld_k(F, t)= \min\bigl\{Q^\xi_k(\beta) : \beta\in H^1(\omega;\R^3), \xi(\tilde x) = F(x_2e_2+x_3e_3) +te_1 \text{ for $\tilde x\in \omega$}\bigr\},
	 \end{align}
	 where $Q^\xi_k$ as in~\eqref{Qxik}.
	Then, $\Qld_k\to \Qld$ pointwise as $k\to \infty$. 
	\end{corollary}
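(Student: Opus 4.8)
The plan is to fix $(F,t)\in\R^{3\times 3}\times\R$ (only the skew part of $F$ matters, since $Q(\xi|\tnabla\beta)=Q((\xi|\tnabla\beta)^\sym)$, and likewise the trace constraint only sees $\xi$ through its values, so we may as well take $\xi(\tilde x)=F(x_2e_2+x_3e_3)+te_1$ throughout) and to recognize $\Qld_k(F,t)$ and $\Qld(F,t)$ as the minima of the functionals $Q_k^\xi$ and $Q^\xi$ from Lemma~\ref{lem:lower_bounds_abstract}. Lemma~\ref{lem:lower_bounds_abstract} gives exactly the two ingredients needed to pass minima through a $\Gamma$-limit: $\Glim_{k\to\infty}Q_k^\xi=Q^\xi$ with respect to the weak $H^1(\omega;\R^3)$-topology, together with the equicoercivity statement that any sequence $(\beta_k)_k\subset\Vcal^\xi$ of uniformly bounded $Q_k^\xi$-energy is weakly precompact in $H^1(\omega;\R^3)$ with limit in $\Vcal^\xi$ satisfying the trace constraint. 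By the fundamental theorem of $\Gamma$-convergence (see e.g.~\cite{Bra02, Dal93}), these two facts imply $\min Q_k^\xi\to\min Q^\xi$, which is precisely $\Qld_k(F,t)\to\Qld(F,t)$.

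In more detail, I would argue as follows. First, by Remark~\ref{minimizers_Qxik} the minimum defining $\Qld_k(F,t)$ is attained at some $\beta_k\in\Vcal^\xi$, and by Lemma~\ref{lem:existence} the minimum defining $\Qld(F,t)$ is attained at $\beta^\xi\in\Vcal^\xi$. For the upper bound $\limsup_k\Qld_k(F,t)\le\Qld(F,t)$: since $(Q_k^\xi)_k$ is an increasing sequence with pointwise supremum $Q^\xi$ (indeed $Q_k^\xi(\beta)\le Q_{k+1}^\xi(\beta)$ and $Q_k^\xi(\beta)\to Q^\xi(\beta)$ for every $\beta$, finite or not), the constant sequence $\beta_k=\beta^\xi$ (which satisfies $\tr(\xi|\tnabla\beta^\xi)=0$, so all the penalization terms vanish) gives $\Qld_k(F,t)\le Q_k^\xi(\beta^\xi)=Q^\xi(\beta^\xi)=\Qld(F,t)$ for every $k$; in fact this shows monotone convergence from below. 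For the lower bound $\liminf_k\Qld_k(F,t)\ge\Qld(F,t)$: the minimizers satisfy $\sup_k Q_k^\xi(\beta_k)=\sup_k\Qld_k(F,t)\le\Qld(F,t)<\infty$, so by the compactness part of Lemma~\ref{lem:lower_bounds_abstract} a subsequence (not relabeled) satisfies $\beta_k\weakly\beta$ in $H^1(\omega;\R^3)$ with $\beta\in\Vcal^\xi$ and $\tr(\xi|\tnabla\beta)=0$; the liminf-inequality of the $\Gamma$-convergence then yields $\liminf_k\Qld_k(F,t)=\liminf_k Q_k^\xi(\beta_k)\ge Q^\xi(\beta)\ge\min Q^\xi=\Qld(F,t)$, and since every subsequence has a further subsequence along which this holds, the full sequence converges.

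I do not expect a genuine obstacle here — the statement is essentially a corollary, as the name suggests, and Lemma~\ref{lem:lower_bounds_abstract} was evidently set up precisely to make this immediate. The only points that require a line of care are: (i) checking that the affine datum $\xi$ depending on $(F,t)$ is admissible in the sense of Lemma~\ref{lem:lower_bounds_abstract}, i.e.\ that $Q^\xi$ is not identically $+\infty$ — but this is exactly the content of Lemma~\ref{lem:existence}, which guarantees a minimizer with finite energy; (ii) noting that one may restrict the minimization in both \eqref{Qkxi} and \eqref{Qxi} (via \eqref{Qast}) to $\Vcal^\xi$ without changing the infimum, which for $\Qld$ is the identity \eqref{minmin} from the proof of Lemma~\ref{lem:existence} and for $\Qld_k$ is Remark~\ref{minimizers_Qxik}; and (iii) the usual subsequence-of-subsequences argument to upgrade convergence of a subsequence to convergence of the whole sequence, which is harmless because the limit $\Qld(F,t)$ is identified independently of the subsequence. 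Thus the proof is a short assembly of the two parts of Lemma~\ref{lem:lower_bounds_abstract} with the standard convergence-of-minima principle for $\Gamma$-convergence.
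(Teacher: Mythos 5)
Your proof is correct and follows exactly the route the paper intends: the corollary is stated there as a direct consequence of Lemma~\ref{lem:lower_bounds_abstract} (together with Remark~\ref{minimizers_Qxik}) and the standard fact that $\Gamma$-convergence plus equicoercivity yields convergence of minima, which is precisely the assembly you carry out. Your additional details (monotone upper bound via the constant sequence $\beta^\xi$, compactness of the minimizers, subsequence-of-subsequences) are the standard fleshing-out of that argument and raise no issues.
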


We conclude this section with a brief discussion of the important special case (for applications), where $\Qld$ emerges from  an isotropic energy density $W_0$, i.e., $W_0(FS) = W_0(F)$ for all $S\in\SO(3)$ and $F\in \R^{3\times 3}$. In this situation, the minimization problem characterizing $\Qld$ can be reduced to solving a Laplace problem with suitable Neumann boundary conditions.  Under the additional geometric assumption that the cross section $\omega$ is a circle, we present a fully explicit expression for $\Qld$.
	
	\begin{example}[Isotropic case]
		If $W_0$ satisfies (H1)-(H3) and is isotropic, then the associated quadratic form is
		\begin{align}\label{Q_isotropic}
			Q(F) = \nabla^2 W_0(\Id)[F,F]= 2 \mu |F^{\sym}|^2 + \lambda (\tr F)^2, \qquad F\in \R^{3\times 3},
		\end{align}	
		with Lam\'e constants $\lambda\in\R$ and $\mu>0$, and one can show that 
		
		\begin{align}\label{Q^3_isotropic}
			\Qld(F,t)=3\mu \Bigl(F_{12}^2\int_{\omega} x_2^2 \dd \tilde x + F_{13}^2\int_\omega x_3^2\dd \tilde x +t^2\Bigr)+\mu\tau F_{23}^2
		\end{align}
		for $(F,t)\in \R^{3\times 3}_{\skw}\times \R$. 
		Here, $\tau$ denotes the torsional rigidity defined by
		\begin{align*}
			\tau := \int_\omega |\tilde x|^2 - \tilde x^\perp \cdot \widetilde \nabla \varphi \dd \tilde x,
		\end{align*}
		and $\ffi : \omega \to \R$ is a solution to the Neumann problem
		\begin{align}\label{Neumann}
			\begin{split}
				\begin{cases}
					\widetilde \Delta \varphi = 0 & \text{ in } \omega,\\
					\widetilde \nabla \varphi \cdot \nu = \tilde x^\perp\cdot \nu &\text {on } \partial \omega
				\end{cases}
			\end{split}
		\end{align}
		where $\nu$ is the outer normal vector to $\partial \omega$. 
		
		By~Corollary~\ref{cor:convergence_Q},~\eqref{Q^3_isotropic} follows from a pointwise limit procedure, once explicit expressions for $\Qld_k$ with $k\in \N$ are available. Indeed, one can extract from the literature on the theory of compressible rods, precisely, from ~\cite[Remark 3.5]{MoM03} and \cite[Remark 4.2]{Sca09}, that 
		\begin{align*}
			\Qld_k(F,t) = \frac{\mu(3\lambda + 3k + 2\mu)}{\lambda + k + \mu} \Bigl(F_{12}^2\int_{\omega} x_2^2 \dd \tilde x 
							+ F_{13}^2\int_\omega x_3^2\dd \tilde x + t^2\Bigr) + \mu\tau F_{23}^2,
		\end{align*}
		and hence, letting $k\to\infty$ implies the stated expression for $\Qld$. 

		 We point out that, in contrast to the situation without the incompressibility constraint, $Q^\ast$ in~\eqref{Q^3_isotropic} does not depend on the first Lam\'e coefficient $\lambda$. As a consistency check, observe that the trace-free constraint in~\eqref{Qxi} makes $Q^\xi$, and thus also $Q^\ast$, independent of $\lambda$. 
		\end{example}
		
		\begin{example}[Isotropic case with circular cross section]\label{ex:isotropic_circular}
		Suppose in addition to the set-up of the previous example that $\omega$ is a circle around the origin with unit measure, i.e., $\omega = \{\tilde x\in\R^2: |\tilde x|^2 \leq \frac{1}{\pi}\}$. Then, the outer unit normal vector to $\partial \omega$ becomes $\sqrt{\pi}\tilde x$, which yields a trivial solution to~\eqref{Neumann}, meaning, $\varphi=0$.
		Due to $\int_\omega x_2^2\dd \tilde x = \int_\omega x_3^2\dd \tilde x = \frac{1}{4\pi}$ and $\tau=\int_\omega x_2^2\dd \tilde x+\int_\omega x_3^2\dd \tilde x=\frac{1}{2\pi}$, formula~\eqref{Q^3_isotropic} simplifies to
		\begin{align*}
			\Qld(F,t) & = \frac{3\mu}{4\pi}\sprlr{F_{12}^2+F_{13}^2}  + \frac{\mu}{2\pi}F_{23}^2 + 3\mu t^2 \\ &= \frac{3\mu}{4\pi}|F|^2 -\frac{\mu}{4\pi} F_{23}^2 +3\mu t^2
		\end{align*}
		for $(F, t)\in \R^{3\times 3}_{\rm skew}\times \R$.
	\end{example}

\section{Technical tools for the upper bounds}\label{sec:tools}
	
	Inner perturbation arguments have proven to be useful cornerstones when it comes to the construction of locally volume-preserving deformations. The latter are needed to find recovery sequences in dimension reduction problems with an incompressibility constraint. 
	In~\cite[Proposition 5.1]{CoD06}, Conti \& Dolzmann established a first lemma of this type for $3$d-$2$d reductions in the context of incompressible membranes. Recently, the authors tailored the statement for $3$d-$1$d reductions in their work on incompressible strings~\cite{EnK19}. 
Sections~\ref{alpha=2} and~\ref{alpha>2} invoke \cite[Lemma 2.3]{EnK19} in the following formulation.
	
	\begin{lemma}[Inner perturbations]\label{lem:reparam_det=1}
		Let $\gamma, \kappa>0$ and $J\subset J'\subset \R$ be bounded closed intervals such that $0\in J$ and $J$ is compactly contained in the interior of $J'$. Further, let $Q_L:=[0, L]\times J\times J$ and $Q_L':=[0, L]\times J'\times J'$. 
		
		If $(y_\epsilon)_{\epsilon}\subset C^2(Q_L';\R^3)$ satisfies
		\begin{align*}
				\|\partial_3 y_\eps \|_{C^1(Q_L';\R^{3})}= \Ocal(\epsilon^{\kappa})
		\end{align*}
		and 
		\begin{align}\label{estimates_det}
			\|\det \rn{y} -1\|_{C^1(Q_L')}= \Ocal(\epsilon^\gamma),
		\end{align}
		then there exists 
		a sequence 
		$(u_\eps)_{\eps}\subset C^1(Q_L;\R^3)$ with 
		\begin{align*}
			\det \rn{u} = 1 \quad\text{ everywhere in $Q_L$ }
		\end{align*}	
		for $\eps$ sufficiently small, and 
			\begin{align}\label{est_varphi}
			\|u_\eps- y_\eps\|_{C^1(Q_L;\R^3)} = \Ocal(\eps^{\gamma + \kappa}). 
		\end{align}
		Replacing $\Ocal(\epsilon^\gamma)$ with $o(\epsilon^\gamma)$ in \eqref{estimates_det} yields \eqref{est_varphi} with right-hand side $o(\epsilon^{\gamma+\kappa})$ .
	\end{lemma}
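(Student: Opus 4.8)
The plan is to correct $y_\eps$ by an inner change of variables acting only on the thin direction $x_3$, which is cheap precisely because $\partial_3 y_\eps = \Ocal(\eps^\kappa)$. Concretely, I would look for $u_\eps$ of the form $u_\eps(x) = y_\eps(x_1, x_2, \psi_\eps(x))$, $x\in Q_L$, for a scalar function $\psi_\eps$ with $\psi_\eps(x_1,x_2,0) = 0$ that stays close to the coordinate $x_3$. Writing $\Phi_\eps(x) := (x_1,x_2,\psi_\eps(x))$ one has $\det\nabla\Phi_\eps = \partial_3\psi_\eps$, and since $\nabla^\eps(\cdot) = \nabla(\cdot)\,\mathrm{diag}(1,\tfrac1\eps,\tfrac1\eps)$, the chain rule gives the key identity
\[
\det\nabla^\eps u_\eps(x) = \big(\det\nabla^\eps y_\eps\big)\big(x_1,x_2,\psi_\eps(x)\big)\,\partial_3\psi_\eps(x).
\]
Hence $\det\nabla^\eps u_\eps\equiv 1$ on $Q_L$ is equivalent to solving, for each fixed $(x_1,x_2)$, the ODE $g_\eps(x_1,x_2,\psi_\eps)\,\partial_3\psi_\eps = 1$ with $\psi_\eps(x_1,x_2,0)=0$, where $g_\eps := \det\nabla^\eps y_\eps$, which lies in $C^1(Q_L')$ since $y_\eps\in C^2$.

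I would then solve this ODE explicitly: setting $G_\eps(x_1,x_2,t):=\int_0^t g_\eps(x_1,x_2,s)\dd s$, the chain rule shows the ODE is equivalent to $G_\eps(x_1,x_2,\psi_\eps(x)) = x_3$, the integration constant being fixed to zero by $\psi_\eps(x_1,x_2,0)=0$ and $G_\eps(x_1,x_2,0)=0$. Hypothesis~\eqref{estimates_det} gives $g_\eps = 1 + \Ocal(\eps^\gamma)$ in $C^1(Q_L')$, so for $\eps$ small $g_\eps>0$, the function $t\mapsto G_\eps(x_1,x_2,t)$ is strictly increasing with $\|G_\eps - \mathrm{id}\|_{C^1(Q_L')}=\Ocal(\eps^\gamma)$, and its image over $J'$ covers $J$ because $J$ is compactly contained in the interior of $J'$. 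The map $(x_1,x_2,t)\mapsto(x_1,x_2,G_\eps(x_1,x_2,t))$ is $C^1$ with lower-triangular Jacobian of positive determinant $g_\eps$, hence a $C^1$-diffeomorphism of $Q_L'$ onto its image whose inverse has third component $\psi_\eps\in C^1(Q_L)$, and $\Phi_\eps(Q_L)\subset Q_L'$. Therefore $u_\eps:=y_\eps\circ\Phi_\eps\in C^1(Q_L;\R^3)$ and $\det\nabla^\eps u_\eps=1$ everywhere in $Q_L$ for $\eps$ small.

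It remains to prove~\eqref{est_varphi}. From $\|G_\eps-\mathrm{id}\|_{C^1(Q_L')}=\Ocal(\eps^\gamma)$ I would deduce the quantitative inversion estimate $\|\psi_\eps - x_3\|_{C^1(Q_L)}=\Ocal(\eps^\gamma)$; in particular $|\psi_\eps-x_3|=\Ocal(\eps^\gamma)$, $|\partial_1\psi_\eps|+|\partial_2\psi_\eps|=\Ocal(\eps^\gamma)$, and $\partial_3\psi_\eps=1+\Ocal(\eps^\gamma)$. Then $u_\eps(x)-y_\eps(x)=\int_{x_3}^{\psi_\eps(x)}\partial_3 y_\eps(x_1,x_2,s)\dd s$ combined with $\|\partial_3 y_\eps\|_{C^0(Q_L')}=\Ocal(\eps^\kappa)$ yields $\|u_\eps-y_\eps\|_{C^0(Q_L)}=\Ocal(\eps^{\gamma+\kappa})$. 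Differentiating $u_\eps=y_\eps\circ\Phi_\eps$ and regrouping the chain-rule terms into (i) differences of $\partial_j y_\eps$ between the shifted argument $(x_1,x_2,\psi_\eps(x))$ and $x$, bounded by $\|\partial_3 y_\eps\|_{C^1}\,|\psi_\eps-x_3|=\Ocal(\eps^{\gamma+\kappa})$ because $\partial_3 y_\eps$ controls the $x_3$-derivatives of $\partial_j y_\eps$, and (ii) terms of the shape $\partial_3 y_\eps(\cdot)\,\partial_j(\psi_\eps-x_3)$, bounded by $\|\partial_3 y_\eps\|_{C^0}\,\Ocal(\eps^\gamma)=\Ocal(\eps^{\gamma+\kappa})$, gives $\|\nabla(u_\eps-y_\eps)\|_{C^0(Q_L)}=\Ocal(\eps^{\gamma+\kappa})$, i.e.~\eqref{est_varphi}. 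The final assertion follows by replacing every occurrence of $\Ocal(\eps^\gamma)$ above with $o(\eps^\gamma)$.

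I expect the principal obstacle to be part (a), the quantitative $C^1$-control of the inverse, namely upgrading $\|G_\eps-\mathrm{id}\|_{C^1}=\Ocal(\eps^\gamma)$ to $\|\psi_\eps-x_3\|_{C^1}=\Ocal(\eps^\gamma)$ \emph{uniformly} in $(x_1,x_2)$ and $\eps$; this rests on the explicit representation through $G_\eps$ and the uniform positivity of $g_\eps$. A second, more bookkeeping-type difficulty is tracking the $\eps$-powers in the chain-rule expansion, where the gain of the factor $\eps^\kappa$ depends on isolating exactly the contributions carrying a $\partial_3 y_\eps$. The domain inclusions $\Phi_\eps(Q_L)\subset Q_L'$, where the hypothesis that $J$ is compactly contained in the interior of $J'$ is used, are then routine once $\|\psi_\eps-x_3\|_{C^0}=\Ocal(\eps^\gamma)$ is available.
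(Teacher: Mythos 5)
Your construction is correct, and it is essentially the same argument as the one behind the cited result ([CoD06, Proposition 5.1], adapted in [EnK19, Lemma 2.3], which the paper invokes without reproof): an inner perturbation $u_\eps = y_\eps(x_1,x_2,\psi_\eps)$ in the thin variable, where $\psi_\eps$ is obtained by inverting the antiderivative $G_\eps(x_1,x_2,t)=\int_0^t \det\nabla^\eps y_\eps\dd s$ so that $\det\nabla^\eps u_\eps = (\det\nabla^\eps y_\eps)(\Phi_\eps)\,\partial_3\psi_\eps \equiv 1$, with the implicit-function/chain-rule bookkeeping giving $\|\psi_\eps-x_3\|_{C^1}=\Ocal(\eps^\gamma)$ and hence the gain $\Ocal(\eps^{\gamma+\kappa})$ from the factor $\partial_3 y_\eps$ (using $\partial_3\partial_j y_\eps=\partial_j\partial_3 y_\eps$ for $y_\eps\in C^2$). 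The compact containment of $J$ in $J'$ and $0\in J$ enter exactly where you use them, so the proposal matches the paper's (cited) proof in both structure and detail.
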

	
	With the help of divergence-free extensions in the cross-section variables,
	we can prove the following approximation result, which is going to be another useful ingredient for the proof of the upper bounds in Sections~\ref{alpha=2} and~\ref{alpha>2}.
	\begin{lemma}[Approximation under divergence constraints]\label{cor:div_ext_approx}
		Let $\beta \in L^2(0,L; H^1(\omega;\R^3))$ and $\rho\in L^2((0,L)\times \omega)$ with $\rho(x_1,\cdot)$ affine for almost every $x_1\in(0,L)$ be related via
			\begin{align*}
			\widetilde \diverg \tilde \beta = \rho.
		\end{align*} 
		Further, let $(\rho_\delta)_{\delta}\subset C^2([0,L]\times \R^2)$ be a sequence of functions that are affine in the cross-section variables satisfying $\rho_\delta \to \rho$ in $L^2((0, L)\times \omega)$ as $\delta\to 0$.

		Then, there exists a sequence $(\beta_\delta)_{\delta}\subset C^2([0,L]\times \R^2;\R^3)$ with
		\begin{align*} 
			\widetilde \diverg \tilde \beta _\delta= \rho_\delta
		\end{align*}
		for every $\delta$ and $\beta_\delta \to \beta$ in $L^2(0,L;H^1(\omega;\R^3))$ as $\delta\to 0$.
	\end{lemma}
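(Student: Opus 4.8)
The plan is to reduce the vector-valued, space-dependent divergence problem to a fixed-slice Bogovskii-type correction and then make everything smooth and $x_1$-dependent in a controlled way. First I would treat the first component $\beta_1$ separately, since the constraint $\widetilde\diverg\tilde\beta = \rho$ only involves the cross-section components $\tilde\beta = (\beta_2,\beta_3)$. For $\beta_1$, pick any sequence $(\beta_{1,\delta})_\delta \subset C^2([0,L]\times\R^2)$ with $\beta_{1,\delta}\to\beta_1$ in $L^2(0,L;H^1(\omega))$; this is a routine mollification/density argument on the fixed domain $(0,L)\times\omega$ followed by an extension (the domain $\omega$ is Lipschitz, so Sobolev extension applies slicewise, but it is cleaner to first extend $\beta_1$ to a function on $(0,L)\times\R^2$ via a bounded extension operator and then mollify). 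The real content is in constructing $\tilde\beta_\delta$.

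The key step is to write $\tilde\beta_\delta = \tilde b_\delta + B(\rho_\delta - \rho_\delta^{\mathrm{exact}})$ where $\tilde b_\delta$ is a smooth approximation of $\tilde\beta$ and $B$ is a fixed right inverse of $\widetilde\diverg$ on $\omega$. Concretely: since $\rho(x_1,\cdot)$ is affine for a.e.\ $x_1$ with $\int_\omega\rho(x_1,\cdot)\,\mathrm d\tilde x = -\int_\omega\xi_1$ matching the compatibility condition (this uses $\int_\omega\widetilde\diverg\tilde\beta = \int_{\partial\omega}\tilde\beta\cdot\nu$, but I will instead exploit that affine functions on $\omega$ with a prescribed mean are handled explicitly — a linear-in-$\tilde x$ field has an explicit divergence-preimage given by an explicit quadratic polynomial in $\tilde x$, plus a constant-field correction for the mean). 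For the general smoothing, approximate $\tilde\beta$ by mollification after a bounded extension to get $\tilde b_\delta \in C^2([0,L]\times\R^2;\R^2)$ with $\tilde b_\delta\to\tilde\beta$ in $L^2(0,L;H^1(\omega))$; then $\widetilde\diverg\tilde b_\delta \to \rho$ in $L^2$, but is generally $\neq\rho_\delta$. The mismatch $g_\delta := \rho_\delta - \widetilde\diverg\tilde b_\delta \to 0$ in $L^2((0,L)\times\omega)$. Now invoke a Bogovskii operator $B:L^2(\omega)\to H^1_0(\omega;\R^2)$ (applicable since $\omega$ is bounded, Lipschitz and — being simply connected — star-shaped with respect to a ball after a bi-Lipschitz change, or more simply cite \cite{GiR86} as the paper already does) satisfying $\widetilde\diverg(Bh) = h - \fint_\omega h$ and $\|Bh\|_{H^1_0(\omega;\R^2)}\le C\|h\|_{L^2(\omega)}$; apply it slicewise to $g_\delta(x_1,\cdot) - $ (its mean is already zero by construction since both $\rho_\delta$ and $\widetilde\diverg\tilde b_\delta$ integrate to the same boundary flux, or one absorbs the constant mean-defect into the explicit affine correction). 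Because $B$ is linear and bounded, $x_1\mapsto (Bg_\delta)(x_1,\cdot)$ lies in $L^2(0,L;H^1_0(\omega;\R^2))$ with norm $\le C\|g_\delta\|_{L^2}\to 0$. A final mollification in $x_1$ (and a harmless extension of the $x_1$-mollified Bogovskii correction, noting $Bg_\delta$ vanishes on $\partial\omega$ so its extension by values on $\R^2\setminus\omega$ can be taken smooth) upgrades it to $C^2([0,L]\times\R^2;\R^2)$ while keeping $\widetilde\diverg\tilde\beta_\delta = \rho_\delta$ on $\omega$ exactly and $\tilde\beta_\delta\to\tilde\beta$ in $L^2(0,L;H^1)$.

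I expect the main obstacle to be two intertwined technical points: (i) maintaining the \emph{exact} constraint $\widetilde\diverg\tilde\beta_\delta = \rho_\delta$ after each smoothing operation — convolution in $x_1$ commutes with $\widetilde\diverg$, so that is fine, but the cross-section mollification of $\tilde b_\delta$ destroys exactness, which is precisely why the Bogovskii correction must be applied \emph{after} all cross-section smoothing and only $x_1$-mollified thereafter; and (ii) ensuring the compatibility (zero-mean) condition needed to apply the Bogovskii operator holds at the level of $g_\delta(x_1,\cdot)$ for every $\delta$ and a.e.\ $x_1$, which I would guarantee by choosing the affine-in-$\tilde x$ structure of $\rho_\delta$ to match the flux $\int_{\partial\omega}\tilde b_\delta\cdot\nu$ up to an explicit constant-field adjustment of $\tilde b_\delta$ (adding $c_\delta(x_1)\,\tilde x$ or a constant vector times an indicator-smoothed bump — here one uses that a constant field has nonzero flux through $\partial\omega$ only if... in fact $\int_{\partial\omega}c\cdot\nu = 0$ for constant $c$, so the correct fix is adding a smooth radial-type field with prescribed flux, or simply subtracting the mean defect via one explicit smooth field $\Psi\in C^\infty(\omega;\R^2)$ with $\int_\omega\widetilde\diverg\Psi = 1$). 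Once these bookkeeping issues are arranged, convergence $\tilde\beta_\delta\to\tilde\beta$ follows from the triangle inequality and boundedness of $B$, and smoothness is immediate from the two mollifications.
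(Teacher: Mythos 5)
Your construction has a genuine gap at the final step. The slicewise Bogovskii correction $Bg_\delta(x_1,\cdot)$ lies only in $H^1_0(\omega;\R^2)$, and mollifying in $x_1$ alone does nothing to improve its regularity in the cross-section variables; moreover, the claim that the extension of an $H^1_0(\omega)$ function by zero to $\R^2\setminus\omega$ ``can be taken smooth'' is false in general (it is merely $H^1(\R^2)$, and the function is only $H^1$ in $\tilde x$ inside $\omega$ as well). So the resulting $\beta_\delta$ is not in $C^2([0,L]\times\R^2;\R^3)$, which is exactly what the lemma asserts. You correctly identified the tension yourself in point (i): a cross-section mollification would restore smoothness but destroy the exact identity $\widetilde\diverg\tilde\beta_\delta=\rho_\delta$, and your proposal does not resolve it. A second, related defect is that you only enforce the constraint on $\omega$, whereas the lemma (and its use in the recovery-sequence constructions, where the trace condition must hold on a larger cube $Q_L'\supset\overline\Omega$) requires $\widetilde\diverg\tilde\beta_\delta=\rho_\delta$ on all of $[0,L]\times\R^2$. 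Upgrading a Bogovskii solution to be smooth up to the boundary of a merely Lipschitz $\omega$ (with data not vanishing on $\partial\omega$) is not available, so the correction-after-smoothing strategy does not close.

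The paper avoids this by exploiting the affine structure of $\rho$ and $\rho_\delta$ directly: one writes down explicit fields $\Xi$, $\Xi_\delta$, quadratic in $\tilde x$, with $\partial_2\Xi_2+\partial_3\Xi_3=\rho$ and likewise for the $\delta$-version, so that $\beta-\Xi$ is exactly divergence-free in the cross-section variables. One then applies a divergence-free extension in $\tilde x$ (the result of Kato, Mitrea, Ponce and Taylor cited in the paper) to view $\beta-\Xi$ as an element of $L^2(0,L;H^1_{\div}(\R^2;\R^3))$, and mollifies; since the divergence-free condition holds on all of $\R^2$ and commutes with convolution, it is preserved \emph{exactly} by the smoothing, with no compatibility condition and no loss of the constraint. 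Setting $\beta_\delta=\hat\beta_\delta+\Xi_\delta$ then gives smooth fields satisfying $\widetilde\diverg\tilde\beta_\delta=\rho_\delta$ on all of $[0,L]\times\R^2$ and converging to $\beta$. In short: the missing idea is to reduce to the homogeneous (divergence-free) constraint via an explicit preimage of the affine right-hand side \emph{before} smoothing, rather than trying to repair the inhomogeneous constraint afterwards with a low-regularity right inverse of the divergence.
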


	\begin{proof}
		Due to the structural properties of $\rho$ and $\rho_\delta$, one can find $a,b,c\in L^2(0,L)$ and $a_\delta,b_\delta,c_\delta\in C^2([0,L])$ such that 
		\begin{align*}
			\rho(x) &= a(x_1)x_2 + b(x_1)x_3 + c(x_1), \\
			\rho_\delta(x) &= a_\delta(x_1)x_2 + b_\delta(x_1)x_3 + c_\delta(x_1).
		\end{align*}
		With the definitions
		\begin{align*}
			\Xi(x) &:= \tfrac{1}{2}\bigl(a(x_1)x_2^2 + c(x_1)x_2\bigr) e_2 + \tfrac{1}{2}\sprlr{b(x_1)x_3^2 + c(x_1)x_3}e_3,\\
			\Xi_\delta(x) &:=\tfrac{1}{2}\bigl(a_\delta(x_1)x_2^2 + c_\delta(x_1)x_2\bigr)e_2 + \tfrac{1}{2}\sprlr{b_\delta(x_1)x_3^2 + c_\delta(x_1)x_3}e_3,
		\end{align*}
		it holds that 
		\begin{align}\label{359}
		\Xi_\delta\to \Xi\quad  \text{in $L^2((0, L)\times \omega)$ as $\delta\to 0$,}
		\end{align}
		and by straight-forward calculation,
		\begin{align}\label{357}
			\partial_2 \Xi_2 + \partial_3 \Xi_3   = \rho \qquad \text{and}\qquad 
			\partial_2(\Xi_\delta\cdot e_2) + \partial_3(\Xi_\delta\cdot e_3)   = \rho_\delta.
		\end{align} 
		 	Hence, $\beta - \Xi\in L^2(0, L;H^1_{\div}(\omega;\R^3))$, and after divergence-free extension in the cross-section variables according to~\cite[Proposition 3.1, Corollary 3.2]{KMPT00}, one can view $\beta -\Xi$ as an element of $L^2(0, L;H^1_{\div}(\R^2;\R^3))$. 
		
		A standard mollification argument yields a sequence $(\hat\beta_\delta)_{\delta}\subset C^2([0,L]\times \R^2;\R^3)$ of functions that are divergence-free in the last two variables, i.e., for any $\delta$
		\begin{align}\label{358}
			\partial_2 (\hat\beta_\delta\cdot e_2)+ \partial_3 (\hat\beta_\delta\cdot e_3) = 0 \quad \text{in $[0,L]\times \R^2$,}
		\end{align}
  such that $\hat\beta_\delta\to \beta - \Xi$ in $L^2(0, L;H^1(\R^2;\R^3))$ as $\delta\to 0$. 
  	
		Finally, in view of~\eqref{357} and~\eqref{358} as well as~\eqref{359}, setting $\beta_\delta = \hat\beta_\delta + \Xi_\delta$ provides the desired sequence. 
	\end{proof}

\section{The regime $\alpha = 2$}\label{alpha=2}

The following $\Gamma$-convergence theorem is the first main result of this paper. It provides a reduced one-dimensional model for incompressible rods, which involves, besides the deformation of the mid-fiber, quantities related to bending and torsion effects. The limit procedure turns the local volume preservation in the three-dimensional model from non-linear elasticity into a trace constraint.

	\begin{theorem}[$\Gamma$-limit for \boldmath{$\alpha=2$}]\label{theo:rods=2} 
	Let $\Ical_\eps^{(2)}$ for $\eps>0$ be the functional introduced in~\eqref{Ical_eps^alpha} with $\alpha=2$. Moreover, let
		\begin{align}\label{I^2}
		\begin{split}
			\Ical\ui{2} : H^2(0,L;\R^3)&\times H^1(0,L;\R^{3\times 2}) \to [0,\infty],\\
			&(u,D) \mapsto \begin{cases}
									\displaystyle\frac{1}{2} \int_0^L \Qld(A(x_1), 0) \dd x_1&\text{ for } (u,D) \in \Acal\ui{2},\\
									\infty &\text{ otherwise, }
								 \end{cases}
		\end{split}
	\end{align}
	where $A:=R^TR'$ with $R:=(u'|D)$, $\Qld$ is defined in~\eqref{Qast}, and 
	\begin{align*}
			\Acal\ui{2} := \{(u,D)\in  H^2(0,L;\R^3)\times H^1(0,L;\R^{3\times 2}): (u'|D)\in\SO(3) \text{ a.e.~in $(0, L)$}\}.
	\end{align*}
	
		\textit{i) (Compactness)} For every sequence $(u_\eps)_\eps\subset H^1(\Omega;\R^3)\cap L^2_0(\Omega;\R^3)$ and  $\sup_{\eps>0} \Ical_\eps\ui{2}(u_\eps) <\infty$, 
		there exists a subsequence (not relabeled) and $(u,D)\in\Acal\ui{2}$ such that
		\begin{align}\label{conv_2}
			\begin{split}
				u_\eps &\to u \text{ in }H^1(\Omega;\R^3),\\
				\frac{1}{\eps}\tnabla u_\eps &\to D\text{ in }L^2(\Omega;\R^{3\times 2}).
			\end{split}
		\end{align}
		
		\textit{ii) (Variational limit)} The sequence $(\Ical_\eps\ui{2})_\eps$ $\Gamma$-converges for $\eps\to 0$ to $\Ical\ui{2}$ with respect to the convergence \eqref{conv_2}, that is, the following two conditions are fulfilled:
		\begin{itemize}
			\item[$ii')$] (Lower bound) Let $(u_\epsilon)_{\epsilon}\subset H^1(\Omega;\R^3)$ satisfy $\eqref{conv_2}$ for $(u,D)\in\Acal\ui{2}$, then 
			\begin{align*}
				\liminf_{\epsilon\to 0} \Ical_\epsilon\ui{2}(u_\epsilon) \geq \Ical\ui{2}(u,D);
			\end{align*}
			
			\item[$ii'')$] (Upper bound) For every $(u,D)\in\Acal\ui{2}$ there exists a sequence $(u_\epsilon)_{\epsilon}\subset H^1(\Omega;\R^3)$ satisfying \eqref{conv_2} and
			\begin{align*}
				\limsup_{\epsilon\to 0} \Ical_\epsilon\ui{2}(u_\epsilon) \leq \Ical\ui{2}(u,D).
			\end{align*}
		\end{itemize}
		
	\end{theorem}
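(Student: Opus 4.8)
The plan is to treat the three assertions in turn, leaning on the $\Gamma$-convergence theory for \emph{compressible} rods in the Kirchhoff regime (see~\cite{MoM03, MoM04, Sca09}) together with the tools prepared in Sections~\ref{sec:densities} and~\ref{sec:tools}. For compactness~$i)$, note that $\Ical_\eps^{(2)}(u_\eps)<\infty$ forces $\det\nabla^\eps u_\eps=1$ a.e.\ in $\Omega$, whence $W(\nabla^\eps u_\eps)=W_0(\nabla^\eps u_\eps)$ and $\tfrac{1}{\eps^2}\int_\Omega W_0(\nabla^\eps u_\eps)\dd x$ is uniformly bounded; since $W_0$ satisfies (H1)--(H3), the compactness statement for compressible rods applies verbatim and produces, along a subsequence, a limit $(u,D)\in\Acal^{(2)}$ for which~\eqref{conv_2} holds.

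For the lower bound~$ii')$ I would use the penalized densities $W_k$ of~\eqref{penalty_density}. Each $W_k$ satisfies (H1)--(H3) (indeed $\nabla^2 W_k(\Id)[F,F]=Q(F)+k(\tr F)^2$) and $W_k\le W$, so $\Ical^{(2)}_{k,\eps}(u_\eps)\le\Ical_\eps^{(2)}(u_\eps)$; moreover the compressible theory gives $\Glim_{\eps\to0}\Ical^{(2)}_{k,\eps}=\Ical^{(2)}_k$ with $\Ical^{(2)}_k(u,D)=\tfrac12\int_0^L\Qld_k(A,0)\dd x_1$ on $\Acal^{(2)}$, where $\Qld_k$ is precisely~\eqref{Qkxi}. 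Since $(u_\eps)_\eps$ converges to $(u,D)\in\Acal^{(2)}$ in the sense of~\eqref{conv_2}, the $\Gamma$-$\liminf$ inequality for the compressible functionals yields $\liminf_{\eps\to0}\Ical_\eps^{(2)}(u_\eps)\ge\tfrac12\int_0^L\Qld_k(A,0)\dd x_1$ for every $k$; letting $k\to\infty$ and invoking the monotonicity of $(\Qld_k)_k$, the pointwise convergence $\Qld_k\to\Qld$ from Corollary~\ref{cor:convergence_Q}, and the monotone convergence theorem gives $\liminf_{\eps\to0}\Ical_\eps^{(2)}(u_\eps)\ge\tfrac12\int_0^L\Qld(A,0)\dd x_1=\Ical^{(2)}(u,D)$.

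The upper bound~$ii'')$ is the core. First I would reduce to \emph{smooth} data by a diagonal argument: for $(u,D)\in\Acal^{(2)}$ put $R=(u'|D)\in H^1((0,L);\SO(3))$, mollify and retract onto $\SO(3)$ to get $R_\delta\in C^\infty([0,L];\SO(3))$ with $R_\delta\to R$ in $H^1$, and set $u_\delta(x_1)=u(0)+\int_0^{x_1}R_\delta(s)e_1\dd s$, $D_\delta=(R_\delta e_2|R_\delta e_3)$; then $(u_\delta,D_\delta)\in\Acal^{(2)}$ converges to $(u,D)$, and since $A_\delta=R_\delta^TR_\delta'\to A=R^TR'$ in $L^2$ and $\Qld(\cdot,0)$ is a continuous quadratic form, $\Ical^{(2)}(u_\delta,D_\delta)\to\Ical^{(2)}(u,D)$. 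So assume $u,D$ smooth with $R=(u'|D)\in\SO(3)$; let $\xi(x_1)(\tilde x)=A(x_1)(x_2e_2+x_3e_3)$, let $\beta^\xi\in\Vcal^\xi$ be the constrained minimizer from Lemma~\ref{lem:existence} (smooth in $x_1$ by the linear, continuous dependence in Remark~\ref{rem:H1}), which obeys the trace condition~\eqref{trace_constraint}. Apply Lemma~\ref{cor:div_ext_approx} to $\beta^\xi$ with $\rho=\rho_\delta=-\xi_1$ to obtain $\beta_\delta\in C^2([0,L]\times\R^2;\R^3)$ with $\widetilde{\diverg}\tilde\beta_\delta=-\xi_1$ and $\beta_\delta\to\beta^\xi$ in $L^2(0,L;H^1(\omega;\R^3))$, and define, on a cuboid $Q_L'$ whose interior contains $\overline\Omega$,
\begin{align*}
y_{\eps,\delta}(x)=u(x_1)+\eps\bigl(x_2d_2(x_1)+x_3d_3(x_1)\bigr)+\eps^2R(x_1)\beta_\delta(x_1,\tilde x).
\end{align*}
A direct computation gives $\nabla^\eps y_{\eps,\delta}=R+\eps\hat M_\delta+\eps^2\hat N_\delta$ with $\hat M_\delta,\hat N_\delta\in C^1(Q_L')$ bounded and $R^T\hat M_\delta=(\xi|\tnabla\beta_\delta)$; since $\tr(\xi|\tnabla\beta_\delta)=\xi_1+\widetilde{\diverg}\tilde\beta_\delta=0$ and $\det R=1$, expanding the determinant shows $\norm{\det\nabla^\eps y_{\eps,\delta}-1}_{C^1(Q_L')}=\Ocal(\eps^2)$, while clearly $\norm{\partial_3y_{\eps,\delta}}_{C^1(Q_L')}=\Ocal(\eps)$. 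Lemma~\ref{lem:reparam_det=1} (with $\gamma=2$, $\kappa=1$) then furnishes $u_\eps^\delta\in C^1(Q_L;\R^3)$ with $\det\nabla^\eps u_\eps^\delta\equiv1$ and $\norm{u_\eps^\delta-y_{\eps,\delta}}_{C^1(Q_L;\R^3)}=\Ocal(\eps^3)$, so $\norm{\nabla^\eps(u_\eps^\delta-y_{\eps,\delta})}_{C^0}=\Ocal(\eps^2)$ and $\nabla^\eps u_\eps^\delta=R+\eps\hat M_\delta+\Ocal(\eps^2)$ uniformly on $\Omega$. Using frame indifference and the second-order Taylor expansion of $W_0$ at $\Id$ (recall $W_0(\Id)=0=\nabla W_0(\Id)$, cf.~\eqref{Taylor}), one gets $W_0(\nabla^\eps u_\eps^\delta)=\tfrac{\eps^2}{2}Q(\xi|\tnabla\beta_\delta)+\Ocal(\eps^3)$, hence (using $\det\nabla^\eps u_\eps^\delta=1$ on $\Omega$) a sequence $(u_\eps^\delta)_\eps\subset H^1(\Omega;\R^3)$ satisfying~\eqref{conv_2} and
\begin{align*}
\Ical_\eps^{(2)}(u_\eps^\delta)=\frac1{\eps^2}\int_\Omega W_0(\nabla^\eps u_\eps^\delta)\dd x\longrightarrow\frac12\int_0^L\int_\omega Q(\xi|\tnabla\beta_\delta)\dd\tilde x\dd x_1\qquad(\eps\to0).
\end{align*}
Finally, letting $\delta\to0$, the right-hand side converges to $\tfrac12\int_0^L Q^{\xi}(\beta^\xi)\dd x_1=\tfrac12\int_0^L\Qld(A,0)\dd x_1=\Ical^{(2)}(u,D)$ by continuity of the quadratic form $\beta\mapsto\int_0^L\int_\omega Q(\xi|\tnabla\beta)\dd\tilde x\dd x_1$ on $L^2(0,L;H^1(\omega;\R^3))$ together with the admissibility $\tr(\xi|\tnabla\beta_\delta)\equiv0$; a diagonal extraction over $\delta$, followed by the reduction to smooth data, completes the proof.

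I expect the upper bound to be the main obstacle. The delicate points are: (a) producing a recovery sequence that is simultaneously smooth \emph{and} defined on a cuboid containing $\Omega$, which is exactly what the divergence-free cross-section extension in Lemma~\ref{cor:div_ext_approx} provides, and the reason the trace constraint must be enforced \emph{exactly} on $\beta_\delta$; (b) the cancellation $\tr(\xi|\tnabla\beta_\delta)\equiv0$, which upgrades $\det\nabla^\eps y_{\eps,\delta}-1$ from $\Ocal(\eps)$ to $\Ocal(\eps^2)$ and thereby makes the volume-fixing perturbation of Lemma~\ref{lem:reparam_det=1} energetically negligible; and (c) keeping track of the two nested limits (density of smooth states in $\Acal^{(2)}$, and the approximation parameter $\delta\to0$) in the diagonalization. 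Compactness and the lower bound, by contrast, are largely bookkeeping on top of the compressible theory and the pointwise approximation $\Qld_k\to\Qld$.
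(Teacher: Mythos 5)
Your proposal is correct and follows essentially the same route as the paper: compactness via $W_0\le W$ and the compressible theory, the lower bound via the penalized densities $W_k$ together with Corollary~\ref{cor:convergence_Q} and monotone convergence, and the upper bound via the ansatz $u+\eps R(x_2e_2+x_3e_3)+\eps^2R\beta$, the divergence-constrained approximation of Lemma~\ref{cor:div_ext_approx}, the inner perturbation Lemma~\ref{lem:reparam_det=1} with $\gamma=2$, $\kappa=1$, and a final diagonalization. The only difference is organizational — you first reduce to smooth $(u,D)$ and then approximate the optimal corrector, whereas the paper couples both approximations in a single parameter $\delta$ — which does not change the substance of the argument.
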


	\begin{proof} Ad $i)$. 
		Let $(u_\eps)_\eps$ be a sequence with uniformly bounded energy and vanishing mean value. 
		It follows from $W_0\leq W$ and hypothesis (H2) that
		\begin{align*}
			\frac{C_1}{\eps^2}\dist^2(\rn{u},\SO(3))\leq\frac{1}{\eps^2}\int_\Omega W_0(\rn{u}) \dd x \leq \Ical_\eps\ui{2}(u_\eps) \leq C
		\end{align*}
		for a constant $C>0$. The statement $i)$ is now an immediate consequence of the compactness result in~\cite[Theorem 2.1]{MoM03}.

		Ad $ii')$. Recalling the definitions of the energy densities $W_k$ in \eqref{penalty_density} and the associated auxiliary functionals $\Ical_{k,\eps}\ui{2}$ from \eqref{penalty_energy},
		 we obtain
		\begin{align*}
			 \Ical\ui{2}_\epsilon(u_\eps)\geq \Ical\ui{2}_{k,\epsilon}(u_\eps)
		\end{align*}
 for every $\eps>0$ and $k\in\N$.
		The lower bound in the compressible case \cite[Theorem 3.1]{MoM03} yields that
		\begin{align}\label{ineq2}
			 \liminf_{\eps\to 0} \Ical\ui{2}_\epsilon(u_\eps)\geq \liminf_{\eps\to 0} \Ical\ui{2}_{k,\epsilon}(u_\eps) \geq \Ical_k\ui{2}(u,D),
		\end{align}
	where
		\begin{align*}
			\Ical_k\ui{2}(u,D)= \frac{1}{2}\int_0^L \Qld_k(A(x_1), 0) \dd x_1
		\end{align*}
		with $A=R^TR'$, $R=(u'|D)$, and $\Qld_k$ defined as in \eqref{Qkxi}. 
		In view of Corollary~\ref{cor:convergence_Q} and the monotonicity of $\Qld_k$ with respect to $k$, meaning, $\Qld_k\leq \Qld_{k+1}$ for all $k\in \N$, the theorem on monotone convergence implies that $\Ical_k\ui{2}(u,D) \to \Ical\ui{2}(u,D)$ for $k\to \infty$. Thus, together with~\eqref{ineq2}, we finally conclude that 
		\begin{align*}
			\liminf_{\eps\to 0} \Ical\ui{2}_\epsilon(u_\epsilon) \geq \lim_{k\to \infty} \Ical\ui{2}_k(u,D) = \Ical\ui{2}(u,D).
		\end{align*}
		
	Ad $ii'')$. Let us fix $(u,D)\in\mathcal A\ui{2}$. We split the proof of the upper bound into three steps, starting with the construction of recovery sequences in the case when we have extra regularity for the refined limit deformations.

		\textit{Step 1: Recovering smooth limit functions.}
		Let $u\in C^3([0,L];\R^3)$, $D\in C^2([0,L];\R^{3\times 2})$ such that $R=(u'|D)\in\SO(3)$ everywhere in $[0,L]$.
		Moreover, suppose that $\beta \in C^2([0,L]\times \R^2;\R^3)$ satisfies 
		\begin{align}\label{trace_vanishing_=2}
			\tr \big(A(x_1)(x_2 e_2 + x_3e_3)|\tnabla \beta(x)\big) = 0
		\end{align}
		 for every $x\in [0,L]\times \R^2$. Note that in the following, we drop the arguments $x_1$ and $x$ in our notation when they are clear from the context. 
		 
		Now, let $J\subset J'\subset \R$ be two intervals as in Lemma \ref{lem:reparam_det=1} with $\omega\subset J\times J$.
		Inspired by the recovery sequence in the situation without incompressibility, see~\cite[Theorem 3.1]{MoM03}, we define for every $\eps>0$ and $x\in Q_L':=[0,L]\times J'\times J'$, 
		\begin{align*}
			y_\epsilon(x) = u(x_1) + \eps R(x_1)(x_2e_2+x_3e_3) + \epsilon^2 R(x_1) \beta(x). 
		\end{align*}
		By construction, $(y_\eps)_\eps\subset C^2(Q_L';\R^{3})$ converges to $(u, D)$ in the sense of \eqref{conv_2} and has the property that 
		\begin{align}\label{con1}
		\norm{\partial_3 y_\eps}_{C^1(Q_L';\R^{3\times 3})} = \Ocal(\eps).
		\end{align} 
		The rescaled gradient of $y_\eps$ is given by
		\begin{align}\label{formula_rescaled}
			\rn{y} = R +  \epsilon R\big(A(x_2e_2+x_3e_3) |\tnabla\beta\big) + \epsilon^2\partial_1(R\beta)\otimes e_1,
		\end{align}
	and hence, 
		\begin{align*}
			\det&(\rn{y}) = \det\left(R^T\rn{y}\right) 
			 = 1 + \epsilon\tr\big(A(x_2 e_2 + x_3 e_3)|\tnabla \beta\big) + \Ocal(\epsilon^2),
		\end{align*}
		in view of the identity $\det(\Id + F) = 1 + \tr F + \tr \cof F + \det F$ for every $F\in\R^{3\times 3}$. 
		Together with the vanishing trace assumption \eqref{trace_vanishing_=2}, we conclude that
		\begin{align}\label{con2}
			\norm{\det (\rn{y}) - 1}_{C^1(Q_L')} = \Ocal(\eps^2).
		\end{align}

		In light of~\eqref{con1} and~\eqref{con2}, we can now apply Proposition~\ref{lem:reparam_det=1} with the choices $\gamma=2$ and $\kappa=1$ to obtain a sequence $(u_\epsilon)_{\epsilon}\subset C^1(\overline{\Omega};\R^3)$ 
		satisfying 
		$\det \rn{u} = 1$ everywhere in $\overline{\Omega}$ 
		and
		\begin{align}\label{Oeps3}
			\norm{y_\eps - u_\eps}_{C^1(\overline{\Omega};\R^3)} = \Ocal(\eps^3);
		\end{align}	
		due to the convergence behavior of $(y_\eps)_\eps$, this shows in particular that $(u_\eps)_\eps$ converges to $(u,D)$ in the sense of \eqref{conv_2} as well. 	
		
		Moreover, the combination of~\eqref{formula_rescaled} and~\eqref{Oeps3} gives that
		\begin{align*}
			R^T\rn{u} = \Id  + \eps \big(A(x_2 e_2+x_3e_3)|\tnabla \beta\big) + \Ocal(\epsilon^2), 
		\end{align*} 
		and hence, by the Taylor expansion in~\eqref{Taylor},
		\begin{align*} 
			W(\rn{u}) = W_0(\rn{u}) &= W_0(R^T\rn{u}) = \frac{\epsilon^2}{2}Q\big(A(x_2 e_2+x_3 e_3)|\tnabla \beta\big) + \Ocal(\epsilon^3);
		\end{align*}
		also, we have used here~\eqref{def_W} under consideration of $\det \rn{u} = 1$ and the assumption of frame indifference (H3). 
	Altogether, this shows that
		\begin{align*}
	\limsup_{\epsilon\to 0} \Ical\ui{2}_\eps(u_\eps)
	\leq \frac{1}{2}\int_\Omega  Q\big(A(x_2 e_2+x_3 e_3)|\tnabla \beta\big)\dd x.
		\end{align*}
		
		\textit{Step 2: Approximation and optimization.}
		To approximate $(u,D)\in\mathcal A\ui{2}$ suitably by smooth functions, we invoke the same argument as in~\cite[Theorem 3.1]{MoM03}, which provides a sequence 
		$(R_\delta)_\delta\subset  C^2([0,L];\SO(3))$ such that
		 		\begin{align*} 
		R_\delta \to R=(u'|D) \quad \text{in $H^1(0, L;\R^{3\times 3})$ for $\delta\to 0$,}
		\end{align*} 
and thus, $(R_\delta)_\delta$ converges to $R$ also uniformly, which implies in particular that
		\begin{align}\label{convergence_R_delta}
			A_\delta:=(R_\delta)^TR_\delta'\to R^TR' =A\text{ in }L^2(0,L;\R^{3\times 3}).
		\end{align}  
		For any $\delta$, we consider 
		\begin{align*}
			u_\delta(x_1) :=\int_0^{x_1}R_\delta(t)e_1\dd t - c_\delta \quad \text{and} \quad D_\delta(x_1) := (R_\delta(x_1)e_2|R_\delta(x_1)e_3) \quad \text{ for $x_1\in(0,L)$,}
		\end{align*}
		 where $c_\delta\in\R$ is chosen in such a way that $u_\delta$ has the same mean value as $u$. Then, $R_\delta = (u_\delta'| D_\delta)$, and 
		 \begin{align}\label{convergence_u,D_delta}
			\begin{split}
				u_\delta \to u &\text{ in } H^2(0,L;\R^3),\\
				D_\delta \to D &\text{ in } H^1(0,L;\R^{3\times 2})
			\end{split}
		\end{align}
		as $\delta\to 0$.

Next, we introduce the functions 
               \begin{align*}
			 \xi(x) & := A(x_1)(x_2e_2 + x_3e_3), \\ 
			  \xi_\delta(x) & := A_\delta(x_1)(x_2e_2 + x_3e_3),
		\end{align*}
		for $x\in (0, L)\times \omega$, and take 
	 $\beta_A(x_1, \cdot)$ for $x_1\in (0, L)$ as the unique solution to the minimization problem defining $Q^\ast(A(x_1), 0)$, that is, 
	 \begin{align}\label{beta_applied}
	 \beta_A(x) =\beta^{\xi(x_1, \cdot)}(\tilde x) \quad\text{for $x=(x_1, \tilde x)\in (0,L)\times \omega$,}
	 \end{align}
	 cf.~\eqref{Qast} and Lemma~\ref{lem:existence}. Notice that in light of Remarks~\ref{rem:H1} and~\ref{rem:EL}, $\beta_A\in L^2(\omega;H^1(\omega;\R^3))$. 
		
	Considering~\eqref{convergence_R_delta}, Corollary~\ref{cor:div_ext_approx} applied with $\rho= - \xi\cdot e_1$, $\rho_\delta= - \xi_\delta\cdot e_1$ and $\beta$ as in~\eqref{beta_applied} gives rise to 
	a sequence $(\beta_\delta)_{\delta}\in C^2([0,L]\times \R^2;\R^3)$ that satisfies the trace condition 
		\begin{align*}
			 \tr (\xi_\delta|\tnabla \beta_\delta) = 0
		\end{align*}
		on all of $[0,L]\times \R^2$ and
		\begin{align}\label{convergence_beta_delta}
			\beta_\delta\to \beta \text{ in } L^2(0,L;H^1(\omega;\R^3)). 
		\end{align}	
		
		\textit{Step 3: Diagonalization.}
		For every $\delta$, we repeat Step~1 with $u = u_\delta$, $D = D_\delta$ and $\beta = \beta_\delta$ to obtain a sequence
		$(u_{\delta,\epsilon})_{\epsilon}\subset C^1(\overline \Omega;\R^3)$ satisfying $\det \nabla^\epsilon u_{\delta,\epsilon} = 1$ for all $\epsilon>0$ sufficiently small, and 
		\begin{align}\label{approx}
			\begin{split}
				u_{\delta,\epsilon} \to u_\delta &\text{ in } H^1(\Omega;\R^3),\\
				\frac{1}{\eps}\tnabla u_{\delta,\epsilon} \to D_\delta &\text{ in } L^2(\Omega;\R^{3\times 2}),
			\end{split}
		\end{align}			
		as well as
		\begin{align}\label{diag}
			\limsup_{\delta\to 0}\limsup_{\epsilon\to 0} \Ical_{\epsilon}\ui{2}(u_{\delta,\epsilon}) \leq \limsup_{\delta\to0}\frac{1}{2} \int_\Omega   Q\big(A_\delta(x_2e_2+x_3e_3) |\tnabla \beta_\delta\big) \dd x  = \Ical\ui{2}(u,D).
		\end{align}
		For the last equality, we have exploited \eqref{convergence_R_delta}, \eqref{convergence_beta_delta}, the optimality of $\beta$ from~\eqref{beta_applied}, and the fact that $Q$ is a quadratic form. 
		
		Finally, we extract a diagonal sequence $(u_\eps)_\eps$ from $(u_{\delta,\eps})_{\delta,\eps}$ in the sense of Attouch \cite[Lemma 1.15,1.16]{Att84} to conclude the proof of the upper bound. Indeed, combining \eqref{convergence_u,D_delta} with \eqref{approx}, and \eqref{diag} gives $(u_\eps)_\eps$ satisfying \eqref{conv_2} and
		\begin{align*}
			\limsup_{\eps\to 0}\Ical\ui{2}_\eps(u_\eps) \leq \Ical\ui{2}(u,D),
		\end{align*}
		as claimed.
	\end{proof}
	
	\begin{remark}[Incorporating external forces]  
	The statement of Theorem~\ref{theo:rods=2} still holds if we replace the sequence of elastic energy functionals $(\Ical_\eps\ui{2})_\eps$ with the system energies $(\Jcal\ui{2}_\eps)_\eps$ as in \eqref{Jcal_eps^alpha}; the corresponding $\Gamma$-limit (with respect to the convergence \eqref{conv_2}) is given by
		\begin{align}\label{Jcal22}
			\Jcal\ui{2}(u,D)= \Ical\ui{2}(u,D) - \int_0^L f\cdot u \dd x_1
		\end{align}	
		for $(u, D)\in H^2(0,L;\R^3)\times H^1(0,L;\R^{3\times 2})$, given that the external force term constitutes  a continuous perturbation. 		
		
		Furthermore, we observe that introducing 
		\begin{align}\label{def_F}
			h:(0,L)\to \R^3,\quad t\mapsto \int_0^t f(x_1) \dd x_1,
		\end{align}
		as the primitive of $f$ allows us, in view of~\eqref{hyp1_f}, to rewrite~\eqref{Jcal22} as
		\begin{align}\label{J^2}
			\Jcal\ui{2}(u,D) = \Ical\ui{2}(u,D) + \int_0^L h\cdot u' \dd x_1
		\end{align} 
		for $(u,D)\in\Acal\ui{2}$. Hence, just like $\Ical\ui{2}$, the functional $\Jcal\ui{2}$ is invariant under translation.
	\end{remark}

In the second part of this section, we complement the asymptotic analysis of the sequence $(\Jcal\ui{2}_\eps)_\eps$ by calculating the Euler-Lagrange equations of the limit functional $\Jcal\ui{2}$, which characterize its stationary points. 
	First, let us briefly introduce the necessary notation of the stress and its moments of first order.
	For $(u,D)\in\Acal\ui{2}$, let
	\begin{align}\label{defA1}
		A := R^TR'  \in L^2(\omega;\R^{3\times 3}_{\rm skew}), 
	\end{align} 
	recalling that $R=(u'|D)$. 
	Furthermore, let $\beta_A\in L^2(0,L;H^1(\omega;\R^3))$ be such that $\beta_A(x_1,\cdot)$ is a solution to the variational problem defining $\Qld(A(x_1), 0)$ for $x_1\in (0, L)$, cf.~\eqref{Qast}  and~\eqref{beta_applied}.
	 The stress $M\in L^2((0, L)\times \omega;\R^{3\times 3})$ associated with $(u,D)$ is then given as
	\begin{align}\label{stress}
		M = L\big(A(x_2e_2+x_3e_3)| \tnabla\beta_A\big)
	\end{align}
	with $L$ as in \eqref{def_Lcal}, and $\hat M, \check M\in L^2(0,L;\R^{3\times 3})$ denote the first-order moments of $M$, i.e.,
	\begin{align}\label{first_moments}
		\hat M = \int_\omega x_2 M \dd \tilde x\quad \text{and} \quad \check M = \int_\omega x_3 M \dd \tilde x.
	\end{align}
	
	\begin{proposition}[Euler-Lagrange equations]\label{prop:EL-eq2}			
		Let $\Jcal\ui{2}$ be as in~\eqref{J^2} with~\eqref{def_F} and~\eqref{hyp1_f}.  
		Then, $(u,D)\in \Acal\ui{2}$ is a stationary point of $\Jcal\ui{2}$ if and only if 
		\begin{align*}
				\hat M_{11}'-\hat M_{22}' &= A_{13}(\check M_{21} - \hat M_{31}) - A_{23}(\check M_{11}- \check M_{33}) - h\cdot De_1 ,\\
			 \check M_{11}' -\check M_{33}' &=- A_{12}(\check M_{21} -\hat M_{31}) +  A_{23}(\hat M_{11} - \hat M_{22})  - h\cdot De_2,\\
			\check M_{21}' - \hat M_{31}' &= A_{12}(\check M_{11} - \check M_{33})	-A_{13}(\hat M_{11}-\hat M_{22}) ,
			\end{align*}
			and
			\begin{align*}
				\hat M_{11}(0) - \hat M_{22}(0) & = \hat M_{11}(L) - \hat M_{22}(L) = 0,\\
				\check M_{11}(0) - \check M_{33}(0) &= \check M_{11}(L) - \check M_{33}(L) = 0,\\
				\check M_{21}(0) - \hat M_{31}(0) &= \check M_{21}(L) - \hat M_{31}(L) = 0,	
		\end{align*}
		where $A,\hat M,\check M$ are defined as in~\eqref{defA1} and~\eqref{first_moments}, respectively.
	\end{proposition}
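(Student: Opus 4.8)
The plan is to derive the Euler--Lagrange system by taking variations of $\Jcal\ui{2}$ along admissible curves in the constraint manifold $\Acal\ui{2}$, using the expression \eqref{J^2} together with the characterization of the limit density $\Qld(A,0)$ as a minimum over the trace-constrained quadratic form $Q^\xi$. First I would set up admissible variations: since $R=(u'|D)$ must stay in $\SO(3)$, the natural infinitesimal variations are $R_s = R e^{s\Phi}$ with $\Phi\in H^1(0,L;\R^{3\times 3}_{\skw})$, equivalently $\dot R = R\Phi$; from this one computes $\dot A = \Phi' + \Phi A - A\Phi = \Phi' + [\Phi, A]$ (using $A=R^TR'$ and the skew-symmetry), and $\dot u' = \dot R e_1 = R\Phi e_1$, so $\dot u(x_1) = \int_0^{x_1} R\Phi e_1\,dt$ up to the translation freedom (which is harmless since $\Jcal\ui{2}$ is translation invariant). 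The force term contributes $\int_0^L h\cdot \dot u'\,dx_1 = \int_0^L h\cdot R\Phi e_1\,dx_1$.

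Next I would differentiate the elastic part $\tfrac12\int_0^L \Qld(A,0)\,dx_1$. Here the key simplification is the envelope/Lagrange-multiplier structure from Remark~\ref{rem:EL}: because $\beta_A(x_1,\cdot)$ is the minimizer of $Q^{\xi}$ with $\xi = A(x_2e_2+x_3e_3)$, the derivative of $\Qld(A,0)$ with respect to $A$ only sees the explicit $A$-dependence through $\xi$, not the induced variation of $\beta_A$ (the latter drops out because $\beta_A$ is a critical point, subject to the trace constraint whose variation pairs against the multiplier $\lambda^\xi$ — and since the admissible test directions for $\beta$ respect the constraint, that pairing vanishes). Concretely,
\begin{align*}
\frac{d}{ds}\Big|_{s=0} \tfrac12\Qld(A_s,0) = \int_\omega L\big(A(x_2e_2+x_3e_3)|\tnabla\beta_A\big):\big(\dot A(x_2e_2+x_3e_3)|0\big)\,d\tilde x = \int_\omega M:\big(\dot A(x_2e_2+x_3e_3)|0\big)\,d\tilde x,
\end{align*}
with $M$ as in \eqref{stress}. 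Using $\dot A = \Phi' + [\Phi,A]$ and the definitions \eqref{first_moments} of the first moments $\hat M,\check M$, this integrates against $x_2$ and $x_3$ to give, after collecting terms, an expression of the form $\int_0^L \big(\hat M e_2 + \check M e_3\big) : (\Phi' + [\Phi,A])\,dx_1$ schematically. Then I would integrate by parts in $x_1$ to move the derivative off $\Phi'$, producing boundary terms at $x_1=0,L$ and a bulk term paired with $\Phi$; combining with the force contribution $\int_0^L h\cdot R\Phi e_1\,dx_1 = \int_0^L (R^Th\otimes e_1):\Phi\,dx_1$ and using that $\Phi$ ranges over all of $H^1(0,L;\R^{3\times 3}_{\skw})$, I extract the three independent scalar equations (one for each of the three independent entries of a skew matrix, $(1,2),(1,3),(2,3)$) together with the three pairs of natural boundary conditions. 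Matching the skew components carefully against the stated system — e.g. the $(1,2)$-component of the pairing yields $\hat M_{11}'-\hat M_{22}'$ on the left and the commutator $[\Phi,A]$ terms plus $h\cdot De_1$ on the right — reproduces exactly the three bulk equations and three boundary conditions in the statement (noting $De_1 = Re_2$, $De_2 = Re_3$, so $h\cdot De_1 = (R^Th)_2$ etc., which is how the force enters).

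The main obstacle I expect is bookkeeping: correctly translating the matrix identity $\int_\omega M:(\dot A\,(x_2e_2+x_3e_3)|0)\,d\tilde x$ plus the force term into the three specific scalar components, keeping track of which entries of the skew-symmetric $\dot A$ and $[\Phi,A]$ survive, and being careful that $M$ itself is not symmetric (so $\hat M_{11}-\hat M_{22}$ rather than a single entry appears). A secondary technical point to justify cleanly is the envelope argument: that one may differentiate $\Qld(A(x_1),0) = \min_\beta Q^{\xi}(\beta)$ under the integral and ignore the variation of the minimizer $\beta_A$ — this needs the linear and continuous dependence $\xi\mapsto\beta^\xi$ from Remark~\ref{rem:H1} (so that $x_1\mapsto\beta_A(x_1,\cdot)$ varies in $L^2(0,L;H^1(\omega;\R^3))$ and the difference quotients converge) together with the weak Euler--Lagrange equation \eqref{Lagrange_equation} to kill the cross term. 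Once those two points are in place, the computation is a routine, if lengthy, integration by parts, and the ``if and only if'' follows because the variations $\Phi$ exhaust the tangent space to $\Acal\ui{2}$ and each test direction is realized.
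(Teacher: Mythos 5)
Your variational setup (curves $s\mapsto Re^{s\Phi}$ in $H^1(0,L;\SO(3))$, skew-symmetric test fields, integration by parts in $x_1$, natural boundary conditions) is the same as the paper's, but the step you call the ``envelope argument'' is wrong, and it is exactly the step where the incompressibility shows up. The induced variation of the minimizer does \emph{not} drop out: the trace constraint in $Q^\xi$ depends explicitly on $\xi$, so when $\xi_A=A(x_2e_2+x_3e_3)$ is perturbed in the direction $\xi_H=H(x_2e_2+x_3e_3)$, the (linear) minimizer moves by $\beta_H=\beta^{\xi_H}$, which by \eqref{trace_constraint} satisfies $\widetilde{\diverg}\,\tilde\beta_H=-\xi_H\cdot e_1\neq 0$ and is therefore \emph{not} an admissible test direction for the constrained problem at frozen $\xi$. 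Consequently the cross term does not vanish; testing \eqref{Lagrange_equation} with $\phi=\beta_H$ gives
\begin{align*}
\int_\omega L\big(\xi_A|\tnabla\beta_A\big):\big(0|\tnabla\beta_H\big)\dd\tilde x
=\frac12\int_\omega \lambda\,\big(H_{12}x_2+H_{13}x_3\big)\dd\tilde x ,
\end{align*}
with $\lambda$ the Lagrange multiplier of the problem defining $\Qld(A,0)$. So the first variation of $\tfrac12\Qld(A,0)$ in the direction $H$ is $\int_\omega M:(\xi_H|0)\dd\tilde x$ \emph{plus} this multiplier term (equivalently: the constrained envelope theorem carries the partial derivative of the constraint with respect to $\xi$, paired with $\lambda$). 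Identifying the first moments of $\lambda$ by the test functions $\phi=(0,\tfrac12x_2^2,0)$ and $\phi=(0,0,\tfrac12x_3^2)$ in \eqref{Lagrange_equation} yields $\hat\lambda=-2\hat M_{22}$ and $\check\lambda=-2\check M_{33}$, cf.~\eqref{Lagrange_lambdaH}, and these are precisely the contributions that turn $\hat M_{11}$, $\check M_{11}$ into the combinations $\hat M_{11}-\hat M_{22}$ and $\check M_{11}-\check M_{33}$ appearing in the statement. With your formula the computation would instead reproduce the compressible-type system containing $\hat M_{11}$ and $\check M_{11}$ alone, which is not equivalent, since $\hat M_{22},\check M_{33}$ do not vanish in general (see the isotropic example, where $\hat M_{22}=-\tfrac{\mu}{4\pi}A_{12}$). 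Your parenthetical explanation that ``$M$ is not symmetric, so $\hat M_{11}-\hat M_{22}$ appears'' is a symptom of the same confusion: $M$ is in fact symmetric (since $Q(F)=Q(F^{\sym})$), and the $-\hat M_{22}$, $-\check M_{33}$ terms have nothing to do with symmetry of $M$; they are multiplier moments.

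Two secondary points. First, a sign slip: with $\dot R=R\Phi$ one gets $\dot A=\Phi'+A\Phi-\Phi A$, not $\Phi'+\Phi A-A\Phi$; this would corrupt the signs of the $A$-dependent terms in the final equations. Second, the justification that one may differentiate under the $x_1$-integral is indeed available through the linear, bounded dependence $\xi\mapsto\beta^\xi$ of Remarks~\ref{rem:EL} and~\ref{rem:H1} (in fact, since $\Qld$ is a quadratic form and $A\mapsto\beta_A$ is linear, the variation is purely algebraic); but it must be combined with the multiplier computation above, as in the paper's proof, rather than with an unconstrained envelope identity.
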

	
	\begin{proof}
 The calculation of the first variation of $\Jcal\ui{2}$, which we will identify with a functional on $H^1(0, L;\SO(3))$ in the following, can be done similarly to~\cite[Lemma 2.3]{MoM08}. Precisely, for any $(u, D)\in \Acal\ui{2}$
 and $B\in H^1(0, L;\R^{3\times 3}_{\rm skew})$, we consider a curve 
		\begin{align*}
		\gamma : (-1,1) \to H^1(0, L;\SO(3))\qquad\text{ with $\gamma(0) = R=(u'|D)$ and $\partial_s\gamma(0) = RB$;}
		\end{align*} 
		notice that the tangent space of $H^1(0, L;\SO(3))$ at $R$ can be identified with $R H^1(0, L;\R^{3\times 3}_{\rm skew})$. 
		Evaluating $\Jcal\ui{2}$ along this curve gives 
		\begin{align*}
			\Jcal\ui{2}(\gamma(s)) &= \frac{1}{2}\int_0^L \Qld(\gamma(s)^T\gamma(s)', 0) \dd x_1 + \int_0^L h\cdot \gamma(s)e_1 \dd x_1
		\end{align*} for $s\in(-1,1)$. In view of
		\begin{align}\label{derivative_gamma}
			\frac{\dd}{\dd s}\restrict{s=0} \gamma(s)^T\gamma(s)' &= - B (R^TR') + (R^TR')B + B' \nonumber \\ &= AB-BA+B'=: H \in L^2(0, L;\R^{3\times 3}_{\rm skew}),
		\end{align} 
		we find that
		\begin{align}\label{split}
			\begin{split}
				\frac{\dd}{\dd s}\restrict{s=0} \Jcal\ui{2}(\gamma(t)) =
				\int_\Omega  &M : \sprlr{H(x_2e_2+x_3e_3)\big| 0 \big | 0}\dd x\\
					&+	\int_\Omega M : (0|\tnabla \beta_H) \dd x + \int_0^L  h\cdot RBe_1 \dd x_1,
			\end{split}
		\end{align}
where $\beta_H\in L^2(0,L; H^1(\omega;\R^3))$ is such that $\beta_H(x_1, \cdot)$ solves the minimization problem in~\eqref{Qast} with the argument $(H(x_1), 0)$, cf.~Remark~\ref{rem:H1}.

		In view of~\eqref{first_moments}, the first integral in~\eqref{split} can be rewritten as
		\begin{align}\label{first_summand=2}
			\int_0^L  \hat Me_1\cdot He_2 + \check M e_1 \cdot He_3\dd x_1.
		\end{align} 
		The treatment of the second term in \eqref{split} exploits the Euler-Lagrange equations~\eqref{Lagrange_equation} and the trace condition~\eqref{trace_constraint} applied to every affine function $\xi(\tilde x)=H(x_1)(x_2e_2 + x_3e_3)$ with $x_1\in (0, L)$. If we write $\lambda_H(x_1, \cdot)$ for the corresponding Lagrange multipliers, $\lambda_H$ can be viewed as an element in $L^2((0, L)\times \omega)$ due to the linear dependence on the affine input pointed out in Remark~\ref{rem:EL}.
		Let us introduce $\hat\lambda_H,\check\lambda_H \in L^2(0, L)$ as the first moments of $\lambda_H$,  that is,
		\begin{align*}
			\hat\lambda_H(x_1) = \int_\omega \lambda_H x_2 \dd \tilde x\quad \text{and} \quad  \check\lambda_H(x_1) = \int_\omega \lambda_H x_3 \dd \tilde x.
		\end{align*}
		Then,
		\begin{align}\label{second_summand=2}
			\int_\Omega M : (0|\tnabla \beta_H) \dd x &= -\frac{1}{2} \int_\Omega \lambda_H\, \widetilde \diverg \tilde \beta_H\dd x=\frac{1}{2}\int_\Omega \lambda_H (H_{12}x_2+H_{13}x_3) \dd x \nonumber \\ 
				&=\frac{1}{2}\int_0^L \hat\lambda_H e_1\cdot He_2+ \check\lambda_H e_1 \cdot He_3 \dd x_1.
		\end{align}
		
		On the other hand, the choice of test fields $\phi = (0,\frac{1}{2}x_2^2,0)$ and $\phi = (0,0,\frac{1}{2}x_3^2)$ in \eqref{Lagrange_equation} yields that
		\begin{align}\label{Lagrange_lambdaH}
			\hat\lambda_H = -2\hat M_{22}\quad\text{ and }\quad \check\lambda_H = -2\check M_{33}.
		\end{align}
		Therefore, by joining~\eqref{split} with \eqref{first_summand=2} and \eqref{second_summand=2}, we find that
		\begin{align}\label{total}
			\frac{\dd}{\dd t}\restrict{t=0} \Jcal\ui{2}(\gamma(t))= \int_0^L  &(\hat Me_1 -  \hat M_{22} e_1)\cdot He_2 \nonumber\\
			&\qquad + (\check M e_1 - \check M_{33} e_1)\cdot He_3 +  h\cdot RBe_1 \dd x_1.
		\end{align}
		
To conclude, it suffices now to specialize $B$ to three classes of test fields, recalling that $H$ depends on $B$ through~\eqref{derivative_gamma}: For $\psi, \theta, \sigma \in H^1(0,L)$, we plug
		\begin{align*}
			\begin{split}
				B &= \psi e_1\otimes e_2 - \psi e_2\otimes e_1,\\
				B &= \theta e_1\otimes e_3 - \theta e_3\otimes e_1,\\
				B &= \sigma e_2\otimes e_3 - \sigma e_3\otimes e_2,
			\end{split}
		\end{align*}
		into~\eqref{total},
which gives rise to the system 
		\begin{align*}
				\displaystyle\int_0^L (\hat M_{11}-\hat M_{22})\psi' + A_{13}(\check M_{21} - \hat M_{31})\psi - A_{23}(\check M_{11} - \check M_{33})\psi  - h \cdot Re_2 \psi \dd x_1 &= 0,\\
				\displaystyle\int_0^L  (\check M_{11} - \check M_{33})\theta'  - A_{12}(\check M_{21} - \hat M_{31})\theta + A_{23}(\hat M_{11}-\hat M_{22})\theta- h \cdot Re_3 \theta \dd x_1 &= 0,\\
				\displaystyle\int_0^L (\check M_{21} - \hat M_{31})\sigma'  + A_{12}(\check M_{11} - \check M_{33})- \sigma   A_{13}(\hat M_{11}-\hat M_{22})\sigma \dd x_1 &= 0.
			\end{align*}
This corresponds to the weak formulation of the stated equations and boundary conditions. 
	\end{proof}

	In the special case of isotropic incompressible rods with circular cross section, the characterizing equations for stationary points simplify considerably.

	\begin{example}  
	 We adopt the setting of Example~\ref{ex:isotropic_circular}, that is, $\omega$ is a circle around the origin with unit measure and $W_0$ is supposed to be isotropic, which implies that $Q$ is of the form \eqref{Q_isotropic} with Lam\'e coefficients $\lambda\in \R$ and $\mu>0$.
		Then, under the assumptions of Proposition \ref{prop:EL-eq2}, $(u,D)\in\Acal\ui{2}$ is a stationary point of $\Jcal\ui{2}$ if and only if
		\begin{align}\label{system_isotropic}
			\begin{cases}
				A_{12}' = - \frac{4\pi}{3\mu} h\cdot De_1,\quad &A_{12}(0) = A_{12}(L) = 0,\\		
				A_{13}' = -\frac{4\pi}{3\mu} h\cdot De_2,\quad &A_{13}(0) = A_{13}(L) = 0,\\
				A_{23}=0.
			\end{cases}
		\end{align} 

		Indeed, in this situation, $L$ takes the form
		\begin{align*}
			L F = 2\mu F^{\sym} + \lambda \tr(F)\Id \quad \text{for $F\in\R^{3\times3},$}
		\end{align*} 
		with $\mu>0$ and $\lambda\in \R$, and $\beta_A$ can be determined to be 
		\begin{align*}
			\beta_A(x) = -\tfrac{1}{4}(A_{12}(x_2^2-x_3^2) + 2A_{13}x_2x_3))e_2 -\tfrac{1}{4}(A_{13}(x_3^2-x_2^2) + 2A_{12}x_2x_3))e_3 
		\end{align*}		
		for $x\in (0, L)\times \omega$;
		note that $\beta_A$ emerges from the corresponding expression in the compressible case as the limit for diverging first Lam\'e coefficient.
		Then, the stress as defined in~\eqref{stress} becomes
		\begin{align*}
			M &= 2\mu \big(A(x_2e_2+x_3e_3)| \tnabla\beta_A\big)^{\rm sym} \\& = \mu\begin{pmatrix}
					2(A_{12}x_2+A_{13}x_3) & A_{23} x_3 & -A_{23}x_2\\A_{23}x_3 & - A_{12}x_2-A_{13}x_3  & 0 \\ -A_{23}x_2 & 0 & - A_{12}x_2-A_{13}x_3 
				\end{pmatrix}
		\end{align*}
		and, in view of \eqref{centerofmass}, the first bending moments are
		
		\begin{align*}
			\hat M = \frac{\mu}{4\pi}\begin{pmatrix}
					2A_{12} & 0 & -A_{23}\\0& - A_{12} & 0 \\ -A_{23} & 0 & - A_{12}
				\end{pmatrix}\quad\text{and}\quad  \check M = \frac{\mu}{4\pi}\begin{pmatrix}
					2A_{13} & A_{23} &0\\A_{23}& -A_{13} & 0 \\ 0 & 0 & -A_{13}
				\end{pmatrix}. 
		\end{align*}
		Finally, we insert these expressions into the equations of Proposition~\ref{prop:EL-eq2},
		 which gives rise to~\eqref{system_isotropic}.
	\end{example}
	
	We conclude the study of the regime $\alpha=2$ with a brief comparison of the Euler-Lagrange equations for rods with and without a local volume-preservation constraint.

	\begin{remark}[Comparison with compressible rods]
 a) The difference between the result of Proposition~\ref{prop:EL-eq2} and \cite[Lemma~2.3]{MoM08} lies in the presence of non-trivial bending terms $\hat M_{22}$ and $\check M_{33}$. The latter arise as moments of the Lagrange multipliers that are necessary to accommodate the trace constraint in the minimization problem defining $\Qld$, cf.~\eqref{Lagrange_lambdaH} and Remark~\ref{rem:EL}.

 b) In the special case of rods with circular cross-section of isotropic material,
the  structure of the Euler-Lagrange equations is identical, but the constant coefficients vary. To be more precise, the analogue of the factor $\frac{4\pi}{3\mu}$ in~\eqref{system_isotropic} is $\frac{4\pi(\lambda+\mu)}{\mu(3\lambda+2\mu)}$ when the assumption of incompressibility is dropped. The connection between these factors becomes apparent in the limit of diverging first Lam\'e coefficients. 
 \end{remark}

\section{The regimes $\alpha>2$}\label{alpha>2}
This section covers the asymptotic analysis in all the remaining scaling regimes. 
Like in the setting without incompressibility, these regimes share the common feature that the limit deformations correspond to rigid body motions. 	
In order to extract more refined information on the reduced limit problems, it is useful to estimate the deviation of low energy sequences $(u_\eps)_\eps\subset H^1(\Omega;\R^3)$ (after suitable translation, global rotation, and scaling) from the identity. 
To this end, we follow~\cite{MoM04, Sca09} in considering sequences $(v_\eps)_\eps\subset H^1(0,L;\R^2), (w_\eps)_\eps\subset H^1(0,L)$ given by
	\begin{align}\label{def_v,w}
		\begin{split}			
			v_\epsilon &= \frac{1}{\epsilon^{\alpha-2}}\int_\omega \tilde u_\eps \dd \tilde x,\\
			w_\epsilon &=  \frac{1}{\epsilon^{\alpha-1}} \left(\textstyle\int_\omega |\tilde x|^2\dd \tilde x\right)^{-1}\int_\omega \tilde u_\eps\cdot \tilde x^\perp\dd \tilde x;
		\end{split}
	\end{align}
	in the regime $\alpha\geq 3$, we also use $(z_\eps)_\eps\subset H^1(0,L)$ with
	\begin{align}\label{def_z}
		z_\epsilon(x_1) =\displaystyle\frac{1}{\epsilon^{\alpha-1}}\int_\omega u_\epsilon\cdot e_1 - x_1 \dd \tilde x\quad \text{for $x_1\in (0, L)$},
	\end{align}
	which represent (appropriately scaled) versions of averaged length changes perpendicular and in-line with the midfiber, as well as torsion effects, respectively
	
 	Next, we introduce the limit energies in dependence of $\alpha$. For the scaling regime $\alpha\in (2,3)$, let $\Ical\ui{\alpha}: H^2(0,L;\R^2) \times H^1(0,L) \to [0,\infty)$ be given by
	\begin{align}\label{I^2-3}
	 \Ical\ui{\alpha}(v,w)= \frac{1}{2}\int_0^L \Qld(B'(x_1), 0) \dd x_1, 
	\end{align}
	where $\Qld(\cdot, 0)$ is the quadratic form in \eqref{Qast} (see also Remark~\ref{rem:splitting}) and $B\in H^1(0,L;\R^{3\times 3}_{\rm skew})$ is defined as
	\begin{align}\label{def_B}
		B = \begin{pmatrix}
		0 & -v_1' & -v_2' \\ v_1' & 0 & -w \\ v_2' & w & 0
		\end{pmatrix}. 
	\end{align}
	In the von K\'arm\'an-type regime $\alpha=3$ and for $\alpha>3$, we define $\Ical\ui{\alpha}: H^2(0,L;\R^2)\times H^1(0,L)\times H^1(0,L)\to [0,\infty)$ via
	\begin{align}\label{I^3}
		\begin{split}
				\Ical\ui{\alpha}(v,w,z)=\frac{1}{2} \int_0^L \Qld\big(B'(x_1), s\ui{\alpha}(x_1)\big)\dd x_1;
		\end{split}
	\end{align}here, the stored energy density $\Qld$ results from the constrained variational problem defined in~\eqref{Qast}, $B$ is as in~\eqref{def_B} and $s\ui{\alpha}\in L^2(0, L)$ is given by
\begin{align}\label{def_Salpha}	
s\ui{\alpha}=\begin{cases} z'+\frac{1}{2}|v'|^2 &\text{for $\alpha=3,$} \\ z' & \text{for $\alpha>3.$}\end{cases}
\end{align}

With these definitions at hand, we can formulate the following $\Gamma$-convergence result.

	\begin{theorem}[\boldmath{$\Gamma$}-limit for \boldmath{$\alpha>2$}]\label{theo:rods>2}
		 Let $\Ical_\eps^{(\alpha)}$ for $\eps>0$ be the functional introduced in~\eqref{Ical_eps^alpha} with $\alpha>2$ and let $\Ical\ui{\alpha}$ as in~\eqref{I^2-3} and~\eqref{I^3}, respectively.

		\textit{i) (Compactness)} 
		For every sequence $(\bar u_\eps)_\eps\subset H^1(\Omega;\R^3)$ with $\sup_{\eps>0} \Ical_\eps\ui{\alpha}(\bar u_\eps) <\infty$ 
	there exist sequences of translations $(\bar d_\eps)_\eps\subset\R^2$, rotations $(\bar R_\eps)_\eps\in\SO(3)$, and $\bar R\in\SO(3)$ with $\bar R_\eps \to \bar R$,
		as well as $v\in H^2(0,L;\R^2)$ and $w\in H^1(0,L)$ such that, with $u_\eps := \bar R_\eps\bar u_\eps - \bar d_\eps$, the following convergences hold up to the selection of subsequences:
		\begin{align}\label{conv_2-3}
			\begin{split}
				v_\eps \to v &\text{ in } H^1(0,L;\R^2),\\
				w_\eps \weakly w &\text{ in } H^1(0,L),\\
				\tfrac{1}{\eps^{\alpha-2}}(\rn{u} - \Id) \to B &\text{ in } L^2(\Omega;\R^{3\times 3});
			\end{split}
		\end{align}
recall the definitions of $v_\eps$, $w_\eps$ and $B$ in~\eqref{def_v,w} and \eqref{def_B}, respectively.
 Additionally, if $\alpha\geq 3$, there exists $z\in H^1(0,L)$ such that $(z_\eps)_\eps\subset H^1(0,L)$ as in \eqref{def_z} fulfills
		\begin{align}\label{conv_3}
			z_\eps\weakly z \text{ in } H^1(0,L).
		\end{align}	
			
		\textit{ii) (Variational limit)}  If $\alpha\in (2,3)$, the sequence $(\Ical_\eps\ui{\alpha})_\eps$ $\Gamma$-converges to $\Ical\ui{\alpha}$ for $\eps\to 0$ regarding the convergence \eqref{conv_2-3}. For $\alpha\geq 3$, $\Ical\ui{\alpha}$ is the $\Gamma$-limit of $(\Ical_\eps\ui{\alpha})_\eps$  with respect to the convergence \eqref{conv_2-3} and \eqref{conv_3}.
	\end{theorem}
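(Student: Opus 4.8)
\textbf{Plan of proof for Theorem~\ref{theo:rods>2}.}
The strategy follows the same template as the proof of Theorem~\ref{theo:rods=2}, and exploits the corresponding results for compressible rods in~\cite{MoM04, Sca09}. For part~$i)$, compactness is essentially free: since $W_0\le W$ by~\eqref{def_W} and $W_0$ satisfies (H2), any sequence with $\sup_\eps\Ical_\eps\ui{\alpha}(\bar u_\eps)<\infty$ also has uniformly bounded energy for the compressible functionals, so the compactness statements from~\cite[Theorem~2.2 and Remark~2.3]{MoM04} (for $\alpha\in(2,3)$) and~\cite{Sca09} (for $\alpha\ge 3$) apply verbatim and yield the translations $\bar d_\eps$, rotations $\bar R_\eps$, the limits $v,w$ (and $z$ when $\alpha\ge 3$), together with the convergences~\eqref{conv_2-3} and~\eqref{conv_3}; the limit $B$ has the claimed skew-symmetric structure~\eqref{def_B} exactly as in the compressible case.

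For the lower bound in part~$ii)$, I would argue exactly as in Theorem~\ref{theo:rods=2}\,$ii')$: with the penalized densities $W_k$ from~\eqref{penalty_density} and the functionals $\Ical\ui{\alpha}_{k,\eps}$ from~\eqref{penalty_energy}, one has $\Ical\ui{\alpha}_\eps\ge \Ical\ui{\alpha}_{k,\eps}$ for every $k$, so the known compressible lower bounds~\cite{MoM04, Sca09} give
\[
\liminf_{\eps\to 0}\Ical\ui{\alpha}_\eps(u_\eps)\ \ge\ \liminf_{\eps\to 0}\Ical\ui{\alpha}_{k,\eps}(u_\eps)\ \ge\ \Ical\ui{\alpha}_k,
\]
where $\Ical\ui{\alpha}_k$ is the compressible $\Gamma$-limit with $\Qld$ replaced by $\Qld_k$ from~\eqref{Qkxi}. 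By Corollary~\ref{cor:convergence_Q} and the monotonicity $\Qld_k\le\Qld_{k+1}$, the monotone convergence theorem upgrades this to $\liminf_{\eps\to 0}\Ical\ui{\alpha}_\eps(u_\eps)\ge\lim_{k\to\infty}\Ical\ui{\alpha}_k=\Ical\ui{\alpha}$, treating the three cases $\alpha\in(2,3)$, $\alpha=3$, $\alpha>3$ simultaneously (they differ only through the input $(B', s\ui{\alpha})$ to $\Qld$).

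For the upper bound, I would mirror the three-step scheme of Theorem~\ref{theo:rods=2}\,$ii'')$. \emph{Step 1 (smooth data).} Starting from the compressible recovery sequences of~\cite{MoM04, Sca09} — which for, e.g., $\alpha\in(2,3)$ have the form $y_\eps(x)=\big(\eps^{\alpha-1}v_1+\dots,\ \eps^{\alpha-2}x_1+\dots\big)+\eps^{\alpha-1}R(x_1)(x_2e_2+x_3e_3)+\eps^{\alpha}\,(\text{corrector }\beta)$, with the analogous ansatz incorporating $z$ and $s\ui{\alpha}$ when $\alpha\ge 3$ — one computes $\rn{y}$ and expands $\det\rn{y}$ via $\det(\Id+F)=1+\tr F+\tr\cof F+\det F$. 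The corrector $\beta$ is chosen to satisfy the trace condition $\tr\big(\xi|\tnabla\beta\big)=0$ with $\xi(\tilde x)=B'(x_1)(x_2e_2+x_3e_3)+s\ui{\alpha}(x_1)e_1$, so the leading term in $\det\rn{y}-1$ cancels and one obtains $\|\det\rn{y}-1\|_{C^1}=\Ocal(\eps^\gamma)$ for a suitable $\gamma>0$ depending on $\alpha$, together with $\|\partial_3 y_\eps\|_{C^1}=\Ocal(\eps^{\alpha-1})$; Lemma~\ref{lem:reparam_det=1} then produces a genuinely incompressible sequence $u_\eps$ at negligible energetic cost, and the Taylor expansion~\eqref{Taylor} plus frame indifference (H3) give the upper bound with density $\tfrac12 Q(\xi|\tnabla\beta)$. \emph{Step 2 (approximation).} One approximates $(v,w)$ (and $z$) by smooth functions so that $B'$ (and $s\ui{\alpha}$) converge in $L^2$, and uses Lemma~\ref{cor:div_ext_approx} with $\rho=-\xi\cdot e_1$ to produce smooth correctors $\beta_\delta$ satisfying the exact trace condition and converging to the optimal $\beta$ defining $\Qld(B',s\ui{\alpha})$. \emph{Step 3 (diagonalization).} A standard Attouch diagonalization~\cite[Lemma~1.15, 1.16]{Att84} combines the two parameters.

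\textbf{Main obstacle.} The delicate point is Step~1: one must verify that the higher-order terms in the compressible recovery sequences — which differ structurally across the four regimes, and in the regime $\alpha=3$ involve the nonlinear term $\tfrac12|v'|^2$ inside $s\ui{3}$ — still produce a determinant deviation of order $\eps^\gamma$ with $\gamma$ \emph{strictly larger} than the energy scaling exponent, and that $\gamma+\kappa=\gamma+(\alpha-1)$ is large enough for the perturbation in~\eqref{est_varphi} to be energetically negligible after division by $\eps^{2\alpha-2}$. Checking this bookkeeping carefully in each of the cases $\alpha\in(2,3)$, $\alpha=3$, $\alpha>3$, and making sure the trace constraint is exactly the linearization of $\det=1$ at the relevant order, is where the real work lies; the rest is a routine adaptation of the $\alpha=2$ argument.
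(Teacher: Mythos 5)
Your plan reproduces the paper's strategy almost verbatim: compactness and the lower bound are exactly the paper's arguments (compressible compactness from \cite{MoM04, Sca09}; the penalized functionals $\Ical\ui{\alpha}_{k,\eps}$, the compressible $\Gamma$-limits with $\Qld_k$, and Corollary~\ref{cor:convergence_Q} plus monotone convergence), and the upper bound follows the same three-step scheme (compressible recovery ansatz with a trace-constrained corrector, the inner perturbation Lemma~\ref{lem:reparam_det=1}, the divergence-constrained approximation Lemma~\ref{cor:div_ext_approx}, and Attouch diagonalization). The problem is that the one place where you yourself locate ``the real work,'' namely the Step~1 bookkeeping, is exactly where your provisional claims are wrong, so the step as written would fail. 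For $\alpha\in(2,3)$ the cross-sectional term of the ansatz must scale like $\eps$, namely $\eps R_\eps(x_1)(x_2e_2+x_3e_3)$ with the $\eps$-dependent rotation $R_\eps=\exp(\eps^{\alpha-2}B)$, and the corrector enters at order $\eps^{\alpha}$; with your scaling $\eps^{\alpha-1}R(x_1)(x_2e_2+x_3e_3)$ the rescaled gradient converges to $(e_1|0|0)$, so $\dist(\rn{y},\SO(3))$ stays of order one and $\Ical\ui{\alpha}_\eps(y_\eps)$ blows up. Consequently your exponent $\kappa$ is also wrong: for the correct ansatz one has $\norm{\partial_3 y_\eps}_{C^1}=\Ocal(\eps)$, i.e.\ $\kappa=1$, not $\alpha-1$.

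The correct accounting is as follows. The trace constraint is the linearization of $\det=1$ at the strain scale $\eps^{\alpha-1}$ (not at the energy exponent $2\alpha-2$ that your ``main obstacle'' paragraph invokes), and it yields $\norm{\det\rn{y}-1}_{C^1}=o(\eps^{\alpha-1})$, i.e.\ $\gamma=\alpha-1$; Lemma~\ref{lem:reparam_det=1} with $(\gamma,\kappa)=(\alpha-1,1)$ then gives $\norm{u_\eps-y_\eps}_{C^1}=o(\eps^{\alpha})$, so after dividing the cross-sectional derivatives by $\eps$ the rescaled gradients differ by $o(\eps^{\alpha-1})$, which is negligible against the strain of order $\eps^{\alpha-1}$ once the energy is divided by $\eps^{2\alpha-2}$. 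Note also that in the von K\'arm\'an-type case $\alpha=3$ the corrector is not just $\beta$: the compressible ansatz of \cite{MoM04, Sca09} carries additional terms quadratic in $(v',w)$ (the $\gamma,\bar\gamma$-correction in $\beta\ui{3}$), and only together with $s\ui{3}=z'+\tfrac12|v'|^2$ in the trace condition does the determinant deviation reach order $o(\eps^{2})$; your sketch leaves this implicit. So the architecture of your proof is the right one, but Step~1 needs to be redone with these scalings before the argument closes.
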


	\begin{proof}
		Ad $i)$. Since any sequence $(\bar u_\eps)_\eps$ with uniformly bounded energy satisfies 
		\begin{align*}
			\frac{1}{\eps^{2\alpha -2}}\int_\Omega W_0(\rn{\bar u}) \dd x \leq \frac{1}{\eps^{2\alpha -2}} \int_\Omega W(\rn{\bar u}) \dd x = \Ical\ui{\alpha}_\eps(\bar u_\eps) \leq C
		\end{align*}
		for a constant $C>0$, and $W_0$ satisfies (H2), the statement follows directly from the literature on the compressible case. The compactness result for the von Karman-type case $\alpha=3$ was first proven in~\cite[Theorem 2.2]{MoM04}, for the remaining $\alpha>2$, we refer to~\cite[Theorem 3.3]{Sca09}, where all scaling regimes are covered in the more general context of curved rods.

	Ad $ii)$.  As pointed out in the introductory Section~\ref{subsec:approach}, the $\Gamma$-limits $\Ical_k\ui{\alpha}$ of $(\Ical\ui{\alpha}_{k,\eps})_\eps$ as in \eqref{penalty_energy} in the unconstrained setting provide lower bounds for the incompressible limit energy, which implies that
		\begin{align*}
			\Ical\ui{\alpha} \geq \sup_{k\in\N} \Ical\ui{\alpha}_k 
		\end{align*}
		with
		\begin{align*}
			\begin{cases} 
				\Ical\ui{\alpha}_k(v,w) = \displaystyle \frac{1}{2}\displaystyle\int_0^L \Qld_k(B'(x_1), 0) \dd x_1,&\text{ if } \alpha \in (2,3),\\
				\Ical\ui{\alpha}_k(v,w,z) =\displaystyle \frac{1}{2}\displaystyle\int_0^L \Qld_k\big(B'(x_1), s\ui{\alpha}(x_1)\big)\dd x_1,&\text{ if } \alpha \geq 3,
			\end{cases}
		\end{align*}	
		cf.~\eqref{def_B} and~\eqref{def_Salpha}.	 
	The sought liminf-inequality follows with the help of Corollary~\ref{cor:convergence_Q}.
		
	For easier reading, we copy the structure of the proof of Theorem~\ref{theo:rods=2} and subdivide the arguments for the upper bound in three steps.
		
		\textit{Step 1: Recovering smooth limit functions.}
		Let $v\in C^3([0,L];\R^2)$, $w\in C^2([0,L])$, and $B\in C^2([0,L];\R^{3\times 3}_{\skw})$ as in \eqref{def_B}, and  if $\alpha\geq 3$, let also $z\in C^2([0,L])$. 
	We choose $\beta \in C^2([0,L]\times \R^2;\R^3)$ such that
		\begin{align}\label{vanishing_trace}
			\begin{cases}
				\tr \big(B'(x_2e_2+x_3e_3)|\tnabla \beta\big) = 0,&\text{ if } \alpha\in(2,3),\\
				\tr \big(B'(x_2e_2+x_3e_3) + s\ui{\alpha} e_1 | \tnabla \beta\big)= 0&\text{ if } \alpha\geq 3.
			\end{cases}
		\end{align}
Furthermore, let $Q_L\subset Q_L'$ be cubes as in 
	Lemma \ref{lem:reparam_det=1} such that $Q_L$ contains $\Omega$. 

The basis for our construction of locally volume-preserving approximations $(u_\eps)_\eps$ are the recovery sequences from the literature on the compressible cases~\cite{MoM03, Sca09}. If $\alpha\in (2,3)$, we set	
\begin{align*}
	y_\eps(x) = \displaystyle \int_0^{x_1} R_\epsilon(s) e_1 \dd s  + \eps R_\epsilon(x_1)(x_2e_2 + x_3e_3) + \epsilon^{\alpha} \beta(x)
\end{align*}
for $x\in Q_L'$, where $R_\eps$ is the $\SO(3)$-valued matrix exponential $R_\eps := \exp(\eps^{\alpha-2}B)$ with $B$ as in~\eqref{def_B}, cf.~\cite[Theorem 5.2, (5.24)]{Sca09}.
For $\alpha\geq 3$, consider
\begin{align*}
	\displaystyle y_\eps(x)= x_\eps + \eps^{\alpha-2} (0,v(x_1)) + \eps^{\alpha-1} B(x_1) (x_2e_2 + x_3e_3) + \eps^{\alpha-1} z(x_1)e_1 +\eps^\alpha \beta\ui{\alpha}(x)	
\end{align*}
for $x\in Q_L'$, where $x_\eps = (x_1,\eps x_2, \eps x_3)$ and
\begin{align*}
	\beta\ui{\alpha}(x) := \begin{cases}
					\beta(x) -  \frac{1}{2}\bigl(x_2\gamma(x_1) + x_3\bar \gamma(x_1)\bigr),  &\text{ if } \alpha = 3,\\
					\beta(x),&\text{ if } \alpha > 3,
				\end{cases}
\end{align*}
with $\gamma := 2w v_2' e_1 + (w^2+|v_1'|^2) e_2 +  v_1'v_2'e_3$ and $\bar \gamma := - 2w v_1' e_1 + v_1' v_2' e_2 +  (w^2+|v_2'|)e_3$; 
		for more details in the case $\alpha=3$, see \cite[Theorem~3.1 and (4.14), (4.15)]{MoM04}, \cite[Theorem~5.1]{Sca09}, and for $\alpha\geq 3$, \cite[Theorem~5.1]{Sca09}. 
	
By these constructions, $(y_\eps)_\eps$ satisfies the desired convergences \eqref{conv_2-3}, and if $\alpha\geq 3$ also \eqref{conv_3}. The specific structure of $y_\eps$ makes it immediate to see that
\begin{align*}
	\norm{\partial_3y_\eps}_{C^1(Q_L';\R^3)} = \Ocal(\eps).
\end{align*}
Moreover, \eqref{vanishing_trace} in conjunction with the computations in~\cite{MoM04, Sca09} shows that 	
	\begin{align*}
			\norm{\det \rn{y} - 1}_{C^1(Q_L')} = o(\eps^{\alpha-1});
		\end{align*}
let us remark that in the case $\alpha=3$, one even obtains that the deviation of $\det \rn{y}$ from $1$ behaves like $ \Ocal(\eps^3)$, but indeed, $o(\eps^2)$ is sufficient for our purposes.

Therefore, we can now apply Lemma \ref{lem:reparam_det=1} to find a sequence $(u_\eps)_\eps\subset C^1(\overline{\Omega};\R^3)$
		such that 
		\begin{align*}
		\det \rn{u} = 1\quad \text{ in $\overline{\Omega}$}
		\end{align*}
		and
		\begin{align*}
			\norm{u_\eps - y_\eps}_{C^1(\overline{\Omega};\R^3)} = o(\eps^\alpha).
		\end{align*}
		This yields in particular, that $(u_\eps)_\eps$ converges as in \eqref{conv_2-3}, and additionally, if $\alpha\geq 3$, that $(u_\eps)_\eps$ satisfies \eqref{conv_3}. 
	Analogously to~\cite[Theorem~5.1, Theorem~5.2]{Sca09}, we obtain that
		\begin{align*}
			W(\rn{u}) \leq\begin{cases}
							\displaystyle	\tfrac{1}{2} \eps^{2\alpha-2}Q\big(B'(x_2e_2+x_3e_3)| \tnabla \beta\big) + o(\epsilon^{2\alpha-2}),&\text{ if } \alpha\in(2,3),\\[0.3cm]
							\displaystyle	\tfrac{1}{2} \eps^{2\alpha-2}Q\big(B'(x_2e_2+x_3e_3) + s\ui{\alpha} e_1| \tnabla \beta\big) + o(\epsilon^{2\alpha-2}),&\text{ if } \alpha=3,
							\end{cases}
		\end{align*}				
	and thus,
		\begin{align}\label{limsup_alpha}
			\limsup_{\eps\to 0} \Ical_\eps\ui{\alpha}(u_\eps) = \begin{cases}
							\displaystyle	\frac{1}{2} \int_\Omega Q\big(B'(x_2e_2+x_3e_3)| \tnabla \beta\big) \dd x ,&\text{ if } \alpha\in(2,3),\\[0.3cm]
							\displaystyle	\frac{1}{2} \int_\Omega Q\big(B'(x_2e_2+x_3e_3) + s\ui{\alpha}e_1| \tnabla \beta\big) \dd x,&\text{ if } \alpha=3.
							\end{cases}
		\end{align}

		\textit{Step 2: Approximation and optimzation.} 
		Let $v\in H^2(0,L;\R^2)$, $w\in H^1(0,L)$, and, if $\alpha\geq 3$, $z\in H^1(0,L)$. Then, there are $(\hat v_\delta)_\delta\subset C^3([0,L];\R^2)$, $(\hat w_\delta)_\delta\subset C^2([0,L])$ such that
		\begin{align*}
			\begin{split}
				\hat v_\delta \to v &\text{ in } H^2(0,L;\R^2),\\
				\hat w_\delta \to w &\text{ in } H^1(0,L).		
			\end{split}	
		\end{align*}
		Furthermore, in the case $\alpha\geq 3$, let $(\hat z_\delta)_\delta\subset C^2([0,L])$ such that
		\begin{align*}
			\hat z_\delta \to z &\text{ in } H^1(0,L).
		\end{align*}
		
		We define $\beta\in L^2(0,L;H^1(\omega;\R^3))$ as follows: 
		if $\alpha\in (2,3)$ and $\alpha\geq 3$, then $\beta(x_1,\cdot)$ is the (unique) solution with vanishing mean value for the minimization problem defining 
		$\Qld(B'(x_1), 0)$ and $\Qld(B'(x_1), s\ui{\alpha}(x_1))$, respectively, cf.~also Lemma~\ref{lem:existence}.

		Now, we apply Corollary \ref{cor:div_ext_approx} with 
		\begin{align*}
			\rho(x) =  	\begin{cases} 
							B'(x_1)(x_2e_2+x_3e_3)\cdot e_1,&\text{ if } \alpha\in(2,3),\\
							B'(x_1)(x_2e_2+x_3e_3)\cdot e_1 +  s\ui{\alpha}(x_1)&\text{ if } \alpha \geq 3,
						\end{cases}
		\end{align*}
		and
		\begin{align*}
			\rho_\delta(x) =	\begin{cases} 
							B'_\delta(x_1)(x_2e_2+x_3e_3)\cdot e_1,&\text{ if } \alpha\in(2,3),\\
							B'_\delta(x_1)(x_2e_2+x_3e_3)\cdot e_1 + s\ui{\alpha}_\delta(x_1)&\text{ if } \alpha=3,
						\end{cases}
		\end{align*}
 where $B_\delta$ and $s\ui{\alpha}_\delta$ are given as in \eqref{def_B} and~\eqref{def_Salpha} with $v, w$ replaced by their approximations $\hat v_\delta, \hat w_\delta$. 
		This provides us with a sequence $(\beta_\delta)_\delta\subset C^2([0,L]\times \R^2;\R^3)$ such that
		\begin{align}\label{traceconstraint22}
			\begin{cases}
				\tr \big(B'_\delta(x_2e_2+x_3e_3)|\tnabla \beta_\delta\big) = 0,&\text{ if } \alpha\in(2,3),\\
				\tr \big(B'_\delta(x_2e_2+x_3e_3) + s\ui{\alpha}_\delta e_1 | \tnabla \beta_\delta\big)= 0&\text{ if } \alpha\geq 3,
			\end{cases}
		\end{align}
		and $\beta_\delta\to \beta$ in $L^2(0,L;H^1(\omega;\R^3))$. 

	\textit{Step 3: Diagonalization.} Exactly as in Step 3 of the proof of Theorem \ref{theo:rods=2}, we apply Step~1 for every $\delta$ with $v=\hat v_\delta$, $w=\hat w_\delta$, $z=\hat z_\delta$ if $\alpha\geq 3$, and the approximation of the optimal choice for $\beta$ from Step~2, i.e., $\beta = \beta_\delta$. This way, we obtain a sequences $(u_{\delta,\eps})_{\eps}\subset H^1(\Omega;\R^3)$  that converge in the sense of~\eqref{conv_2-3}, and~\eqref{conv_3} if $\alpha\geq 3$, and satisfy in view of~\eqref{limsup_alpha}, ~\eqref{Qast} and~\eqref{traceconstraint22},
		\begin{align*}
			\limsup_{\eps\to 0} \Ical_\eps\ui{\alpha}(u_{\delta, \eps}) = \begin{cases}
							\displaystyle	\frac{1}{2} \int_\Omega \Qld (B'_\delta, 0) \dd x ,&\text{ if } \alpha\in(2,3),\\[0.3cm]
							\displaystyle	\frac{1}{2} \int_\Omega \Qld (B'_\delta, s_\delta\ui{\alpha}) \dd x,&\text{ if } \alpha=3.
						\end{cases}
		\end{align*}
	To finalize the proof, it suffices to extract a suitable diagonal sequence $(u_\eps)_\eps$ according to Attouch's lemma. 
	\end{proof}
	
\subsection*{Acknowledgements} 
CK acknowledges partial financial support by the UU Westerdijk fellowship program.


\bibliographystyle{abbrv}
\bibliography{Dimensionreduction}
\end{document}